\date{}
\renewcommand{\uppercasenonmath}[1]{}
\theoremstyle{plain}
\theoremstyle{plain}
\newtheorem{theorem}{Theorem}[section]
\newtheorem{proposition}[theorem]{Proposition}
\newtheorem{lemma}[theorem]{Lemma}
\newtheorem{corollary}[theorem]{Corollary}
\newtheorem*{open question}{Open Question}
\newtheorem{definition}[theorem]{Definition}
\theoremstyle{definition}
\newtheorem*{acknowledgement}{Acknowledgement}
\theoremstyle{remark}
\newtheorem{remark}[theorem]{Remark}
\newcommand{\Tor}{\mbox{\rm Tor}}
\newcommand{\Prufer}{Pr\"{u}fer}
\newcommand{\Q}{\mathcal{Q}}
\newcommand{\Id}{\mathrm{Id}}
\def\fd{{\rm fd}}
\def\cwd{{\rm w.gl.dim}}
\def\GV{{\rm GV}}
\def\tor{{\rm tor}}
\def\Hom{{\rm Hom}}
\def\Ext{{\rm Ext}}
\def\Tor{{\rm Tor}}
\def\fPD{{\rm fPD}}
\def\ker{{\rm ker}}
\def\Ker{{\rm Ker}}
\def\Im{{\rm Im}}
\def\Cok{{\rm Cok}}
\def\Nil{{\rm Nil}}
\def\GV{{\rm GV}}
\def\Max{{\rm Max}}
\def\DW{{\rm DW}}
\def\WQ{{\rm WQ}}
\def\T{{\rm T}}
\def\E{{\rm E}}
\def\VN{{\rm VN}}
\def\DQ{{\rm DQ}}
\def\Spec{{\rm Spec}}
\newcommand{\m}{\frak{m}}
\newcommand{\p}{\frak{p}}
\newcommand{\q}{\frak{q}}
\def\Min{{\rm Min}}
\begin{document}
\begin{center}
{\large  \bf  On $\tau_q$-weak global dimensions of commuative rings}

\vspace{0.5cm} Xiaolei Zhang, Ning Bian, Refat Abdelmawla Khaled Assaad, Wei Qi  \\
\address{Xiaolei Zhang}
\address{School of Mathematics and Statistics, Shandong University of Technology, Zibo 255049, China }
\email{zxlrghj@163.com}

\address{Ning Bian}
\address{School of Mathematics and Statistics, Shandong University of Technology, Zibo 255049, China}
\email{ningbian@sdut.edu.cn}

\address{Refat Abdelmawla Khaled Assaad}
\address{Department of Mathematics, Faculty of Science, University Moulay Ismail Meknes, Box 11201, Zitoune, Morocco}
\email{refat90@hotmail.com}

\address{Wei Qi (Corresponding Author)}
\address{School of Mathematics and Statistics, Shandong University of Technology, Zibo 255049, China}
\email{qwrghj@126.com}
\end{center}

\bigskip
\centerline { \bf  Abstract}
\bigskip
\leftskip10truemm \rightskip10truemm \noindent

In this paper, the $\tau_q$-weak global dimension $\tau_q$-\cwd$(R)$ of a commutative ring $R$ is introduced. Rings  with $\tau_q$-weak global dimension equal to $0$ are studied in terms of homologies,  direct products, polynomial extensions and amalgamations. Besides, we investigate the $\tau_q$-weak global dimensions of polynomial rings.
\vbox to 0.3cm{}\\
{\it Key Words:}  $\tau_q$-flat dimension; $\tau_q$-weak global dimension; $\tau_q$-von Neumann regular ring; polynomial ring.\\
{\it 2010 Mathematics Subject Classification:} 13D05, 13C11.

\leftskip0truemm \rightskip0truemm
\bigskip

\section{Introduction}
Throughout this paper, we always assume $R$ is a commutative ring with identity. For a ring $R$, we denote by $\T(R)$ the total quotient ring of $R$, $\Min(R)$ the set of all minimal primes of $R$, $\Nil(R)$ the nil radical of $R$ and $\fPD(R)$ the small finitistic dimension of $R$.

To build a connection of some non-Noetherian domains  with classical domains, Wang et al. \cite{fm97} introduced the notions of $w$-operations on domains, which was generalized to commutative rings with zero-divisors by  Yin et al. \cite{ywzc11}. This makes it possible to study modules and their homological dimensions over commutative rings in terms of $w$-operations. In 2015,  Kim et al.  \cite{KW14} extended the classical notion of flat modules to that of $w$-flat modules. And then Wang et al. \cite{fk14} showed that a ring $R$ is a von Neumann regular ring if and only if every $R$-module is $w$-flat. Later, Wang et al. \cite{WQ15} constructed the $w$-weak global dimensions of commutative rings, and proved that von Neumann regular rings are precisely rings with  $w$-weak global dimensions 0. Also, PvMDs are exactly integral domains with $w$-weak global dimensions at most $1$. They also established a connection between  $w$-weak global dimensions of rings, $w$-weak global dimensions of polynomial rings and the weak global dimensions of its Nagata rings.

For a more detailed study of commutative rings with zero-divisors,  Zhou et al. \cite{ZDC20} introduced the notion of $q$-operations by utilizing finitely generated semi-regular ideals.  $q$-operations are semi-star operations which are weaker than $w$-operations.  The authors in \cite{ZDC20} also proposed $\tau_q$-Noetherian rings (i.e. a ring in which any ideal is $\tau_q$-finitely generated) and study them via module-theoretic point of view, such as $\tau_q$-analogue of the Hilbert basis theorem, Krull's principal ideal theorem, Cartan-Eilenberg-Bass theorem and Krull intersection theorem. Recently, the authors this paper \cite{zq23} introduced and studied the notions of $\tau_q$-flat modules,  $\tau_q$-von Neumann regular rings and  $\tau_q$-coherent rings. The  authors \cite{zq23} showed that a ring $R$ is  $\tau_q$-von Neumann regular ring, if and only if $\T(R[x])$ is a von Neumann regular ring if and only if $R$ is a reduced ring with $\Min(R)$ compact.  The  authors \cite{zq23} also gave the Chase Theorem for $\tau_q$-coherent rings. The main motivation of this paper is to introduce and study the $\tau_q$-flat dimensions of modules and  $\tau_q$-weak global dimensions of rings. We also establish a connection between the $\tau_q$-weak global dimensions of a ring $R$, the $\tau_q$-weak global dimensions of $R[x]$, and the weak global dimensions of $\T(R[x])$.

\section{Preliminary}

In this section, we recall some basic notions on $q$-operations. For more details, refer to  \cite{zq23,ZDC20}. Let $R$ be a ring and $A,B\subseteq R$. Denote by $(A:_RB):=\{r\in R\mid Br\subseteq A\}$.
Recall that an ideal $I$ of $R$ is said to be \emph{dense} if $(0:_RI)=0$; \emph{semi-regular} if there exists a finitely generated dense sub-ideal of $I$; and \emph{regular} if it contains a non-zero-divisor. The set of all finitely generated semi-regular ideals of $R$ is denoted by $\Q(R)$ (or $\Q$ if $R$ is clear). It is well-known that a finitely generated ideal $I=\langle a_0,a_1,\cdots,a_n\rangle$ is semi-regular if and only if the polynomial $f(x)=a_0+a_1x+\cdots+a_nx^n$ is a regular element in $R[x]$ (see \cite[Exercise 6.5]{fk16}).  Lucas \cite{L89} introduced the ring of finite fractions of $R$:
$$Q_0(R):=\{\alpha\in T(R[x])\mid\ \mbox{there exists}\ I\in \Q(R)\ \mbox{such that } I\alpha\subseteq R\}.$$
Note that for any commutative ring $R$, we have $R\subseteq T(R)\subseteq Q_0(R)$.

Let $M$ be an $R$-module. Denote by
\begin{center}
	{\rm $\tor_{\Q}(M):=\{x\in M|Ix=0$, for some $I\in \Q(R) \}.$}
\end{center}
Recall from \cite{wzcc20} that an $R$-module $M$ is said to be \emph{$\Q$-torsion} (resp., \emph{$\Q$-torsion-free}) if $\tor_{\Q}(M)=M$ (resp., $\tor_{\Q}(M)=0$).  A $\Q$-torsion-free module $M$ is called a \emph{Lucas module} if $\Ext_R^1(R/I,M)=0$ for any $I\in \Q$, and the \emph{Lucas envelope} of $M$ is given by
\begin{center}
	{\rm $M_q:=\{x\in \E_R(M)|Ix\subseteq M$, for some $I\in \Q(R)\},$}
\end{center}
where $\E_R(M)$ is the injective envelope of $M$ as an $R$-module.
By  \cite[Theorem 2.11]{wzcc20}, $M_q=\{x\in \T(M[x])|Ix\subseteq M$, for some $I\in \Q(R)\}.$
Obviously, $M$ is a Lucas module if and only if $M_q=M$. A \emph{$\DQ$ ring} $R$ is a ring for which every $R$-module is a Lucas module.
By \cite[Proposition 2.2]{fkxs20}, $\DQ$ rings are exactly rings with small finitistic dimensions equal to $0$.
Recall from \cite{ZDC20} that an submodule $N$ of a  $\Q$-torsion free module $M$ is called a \emph{$q$-submodule} if $N_q\cap M=N$. If an ideal $I$ of $R$ is a $q$-submodule of $R$, then $I$ is also called a \emph{$q$-ideal} of $R$. A \emph{maximal $q$-ideal} is an ideal of $R$ which is maximal among the $q$-submodules of $R$. The set of all maximal $q$-ideals is denoted by $q$-$\Max(R)$, and it is the set of all maximal non-semi-regular ideals of $R$, and thus is non-empty and a subset of $\Spec(R)$ (see
\cite[Proposition 2.5, Proposition 2.7]{ZDC20}).

An $R$-homomorphism $f:M\rightarrow N$ is called to be  a \emph{$\tau_q$-monomorphism} (resp., \emph{$\tau_q$-epimorphism}, \emph{$\tau_q$-isomorphism}) provided that $f_\m:M_\m\rightarrow N_\m$ is a monomorphism (resp., an epimorphism, an isomorphism) over $R_\m$ for any $\m\in q$-$\Max(R)$. By \cite[Proposition 2.7(5)]{ZDC20}, an $R$-homomorphism $f:M\rightarrow N$ is a $\tau_q$-monomorphism (resp., $\tau_q$-epimorphism, $\tau_q$-isomorphism) if  and only if $\Ker(f)$ is  (resp.,  $\Cok(f)$ is, both $\Ker(f)$ and $\Cok(f)$ are) $\Q$-torsion. A sequence of $R$-modules $A\xrightarrow{f} B\xrightarrow{g} C$ is said to be  \emph{$\tau_q$-exact} provided that $A_\m\rightarrow B_\m\rightarrow C_\m$ is  exact as  $R_\m$-modules for any  $\m\in q$-$\Max(R)$. Let $M$ be an $R$-module.   $M$ is said to be \emph{$\tau_q$-finitely generated} provided that there exists  a $\tau_q$-exact sequence $F\rightarrow M\rightarrow 0$ with $F$ finitely generated free.  $M$ is said to be \emph{$\tau_q$-finitely presented} provided that there exists a $\tau_q$-exact sequence $ F_1\rightarrow F_0\rightarrow M\rightarrow 0$ such that $F_0$ and $F_1$ are finitely generated free modules.

Recall from \cite{fk16} a finitely generated ideal   $J$  of $R$ is called a \emph{Glaz-Vasconcelos ideal} (\emph{$\GV$-ideal} for short) if the natural homomorphism $R\rightarrow \Hom_R(J,R)$ is an isomorphism, and the set of all $\GV$-ideals is denoted by $\GV(R)$. Trivially, $\{R\}\subseteq\GV(R)\subseteq\Q(R)$.
A ring $R$ is said to be a $\DW$-ring (resp.,  $\WQ$-ring) if $\GV(R)=\{R\}$ (resp., $\GV(R)=\Q(R)$). The notions of $w$-flat modules were introduced by  Kim and Wang \cite{KW14} by using $\GV(R)$-torison theories, which is similar with the following $\tau_q$-flat modules.

Recall from \cite{zq23} that an $R$-module $M$ is said to be a \emph{$\tau_q$-flat} module provided that, for any $\tau_q$-monomorphism  $f: A\rightarrow B$,  $1_M\otimes f:M\otimes_RA\rightarrow M\otimes_R B$ is a $\tau_q$-monomorphism. The class of $\tau_q$-flat  modules is closed under  $\tau_q$-isomorphisms.  A ring $R$ is a $\DQ$-ring (resp., $\WQ$-ring) if and only if any $\tau_q$-flat module is flat (resp., $w$-flat). The following result gives some characterizations of  $\tau_q$-flat modules.

\begin{lemma}\label{w-coh-c-c}\cite[Theorem 4.3]{zq23}
The following statements are equivalent for an $R$-module $M$.
\begin{enumerate}
    \item $M$ is $\tau_q$-flat;
    \item   for any monomorphism  $f: A\rightarrow B$,  $1_M\otimes f:M\otimes_RA\rightarrow M\otimes_R B$ is a $\tau_q$-monomorphism;
    \item  For any $N$, $\Tor^R_1(M,N)$ is $\Q$-torsion;
      \item  For any $N$ and $n\geq 1$, $\Tor^R_n(M,N)$ is $\Q$-torsion;
      \item  For any ideal $I$, the natural homomorphism $M\otimes_RI\rightarrow MI$ is a $\tau_q$-isomorphism;
       \item  For any  finitely generated $(\tau_q$-finitely generated$)$ ideal $I$, the natural homomorphism $M\otimes_RI\rightarrow MI$ is a $\tau_q$-isomorphism;
          \item  For any finitely generated $(\tau_q$-finitely generated$)$ ideal $I$,   $\Tor^R_1(R/I,M)$ is $\Q$-torsion;
          \item $M_\m$ is a flat $R_\m$-module for any $\m\in q$-$\Max(R)$;
        \item $M\otimes_R\T(R[x])$ is a flat  $\T(R[x])$-module.
\end{enumerate}
\end{lemma}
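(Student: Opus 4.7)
The plan is to organize the nine conditions around condition (3) as the hub, exploiting that $\Q$-torsion is a purely local property at maximal $q$-ideals. The key enabling observation I would establish first is that an $R$-module $N$ is $\Q$-torsion if and only if $N_\m=0$ for every $\m\in q$-$\Max(R)$, which follows from \cite[Proposition 2.7(5)]{ZDC20} applied to the zero map. Combined with the commutation of $\Tor$ with localization, this would deliver (3) $\Leftrightarrow$ (8) essentially for free: $\Tor_1^R(M,N)_\m \cong \Tor_1^{R_\m}(M_\m,N_\m)$, and letting $N$ range over all $R$-modules (equivalently, over all $R_\m$-modules via restriction of scalars) captures the flatness of $M_\m$ over $R_\m$.

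Next, I would handle the equivalences among (1)--(4) by standard homological arguments. For (1) $\Leftrightarrow$ (2), the point is that every monomorphism is in particular a $\tau_q$-monomorphism, so (2) trivially implies (1) and the converse is immediate. For (2) $\Leftrightarrow$ (3), I would apply the $\Tor$ long exact sequence to an arbitrary $0\to A\to B\to C\to 0$: the kernel of $1_M\otimes(A\hookrightarrow B)$ is the image of $\Tor_1^R(M,C)\to M\otimes_R A$, and closure of $\Q$-torsion under quotients gives (3) $\Rightarrow$ (2), while a free presentation of $C$ identifies $\Tor_1^R(M,C)$ with a kernel of the form appearing in (2), giving the converse. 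For (3) $\Leftrightarrow$ (4), I would dimension-shift: the syzygy sequence gives $\Tor_n^R(M,N)\cong \Tor_{n-1}^R(M,K)$ for $n\geq 2$, so induction reduces (4) to (3).

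The chain (3) $\Rightarrow$ (5) $\Rightarrow$ (6) $\Rightarrow$ (7) $\Rightarrow$ (3) would come from tensoring $M$ with $0\to I\to R\to R/I\to 0$, which identifies the kernel of $M\otimes_R I\to MI$ with $\Tor_1^R(R/I,M)$; the $\tau_q$-isomorphism statements in (5) and (6) thus translate directly into the $\Q$-torsion statements on $\Tor_1^R(R/I,M)$. The step (7) $\Rightarrow$ (3) would use that $\Tor$ commutes with direct limits, that every module is a direct limit of finitely presented modules, and that $\Q$-torsion is closed under direct limits. To see that the ``finitely generated'' and ``$\tau_q$-finitely generated'' versions in (6) and (7) are equivalent, I would apply $\Tor$ to a $\tau_q$-exact presentation of a $\tau_q$-finitely generated ideal and observe that the error terms are $\Q$-torsion and hence absorbed.

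The main technical obstacle is (8) $\Leftrightarrow$ (9). The bridge is a structural description of the maximal spectrum of $\T(R[x])$: each maximal ideal $\widetilde\m$ of $\T(R[x])$ corresponds to a maximal $q$-ideal $\m$ of $R$, and $\T(R[x])_{\widetilde\m}$ is faithfully flat over $R_\m$. Granting this correspondence (which is morally the same bridge used in \cite{zq23} to characterize $\tau_q$-von Neumann regularity via von Neumann regularity of $\T(R[x])$), flatness of $M\otimes_R\T(R[x])$ over $\T(R[x])$ can be checked at each $\widetilde\m$ and then pulled back along faithful flatness to give flatness of $M_\m$ over $R_\m$; the reverse direction is assembled from the local flatness using the same correspondence. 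Producing this spectrum-level correspondence rigorously is where the bulk of the work would lie.
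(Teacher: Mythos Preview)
The paper does not prove this lemma; it is quoted from \cite[Theorem 4.3]{zq23} as a preliminary result, with no argument supplied here. Consequently there is nothing in this paper to compare your proposal against.

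As a standalone sketch your plan is largely sound, but one step is garbled. In handling (1) $\Leftrightarrow$ (2) you write that ``every monomorphism is in particular a $\tau_q$-monomorphism, so (2) trivially implies (1).'' This inference runs in the wrong direction. Because the class of monomorphisms is contained in the class of $\tau_q$-monomorphisms, condition (1) quantifies over the larger class of inputs, so it is (1) $\Rightarrow$ (2) that is immediate. The direction (2) $\Rightarrow$ (1) needs an argument: given a $\tau_q$-monomorphism $f:A\to B$ with $\Q$-torsion kernel $K$, factor $f$ through the honest monomorphism $A/K\hookrightarrow B$, apply (2) to that map, and note that $M\otimes_R K$ is again $\Q$-torsion so $M\otimes_R A\to M\otimes_R(A/K)$ is already a $\tau_q$-isomorphism. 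The remainder of your outline---the localization hub for (3) $\Leftrightarrow$ (8), dimension shifting for (3) $\Leftrightarrow$ (4), direct limits for (7) $\Rightarrow$ (3), and the spectral correspondence between $q$-$\Max(R)$ and $\Max(\T(R[x]))$ for (8) $\Leftrightarrow$ (9)---is a reasonable route, and the last of these is indeed where the real work lies.
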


\section{On $\tau_q$-flat dimensions of modules and  $\tau_q$-weak global dimensions of rings}

The  $w$-flat dimension of  a given $R$-module is defined to be the length of shortest of its shortest
$w$-flat $w$-resolution (see \cite{WQ15}). Now, we introduce the notion of $\tau_q$-flat dimensions as follows.
\begin{definition}
Let $R$ be a ring and $M$ an $R$-module, then $\tau_q$-$\fd_R(M)\leq n$ $(\tau_q$-\fd\ abbreviates $\tau_q$-flat
dimension$)$ if there is a $\tau_q$-exact sequence of $R$-modules
 $$0 \rightarrow  F_n \rightarrow \cdots\rightarrow  F_1 \rightarrow  F_0 \rightarrow  M \rightarrow  0 \ \ \ \ \ \ (\lozenge)$$
with each $F_i$  $\tau_q$-flat. The $\tau_q$-exact sequence $(\lozenge)$ is called a $\tau_q$-flat $\tau_q$-resolution of length n of M. If no such finite $\tau_q$-resolution exists, then
$\tau_q$-$\fd_R(M) =\infty$; otherwise, define $\tau_q$-$\fd_R(M)= n$ if $n$ is the length of a shortest
$\tau_q$-flat $\tau_q$-resolution of $M$.
\end{definition}
It is obvious that an $R$-module $M$ is $\tau_q$-flat if and only if $\tau_q$-$\fd_R(M) = 0$.
 If we denote $\fd_R(M)$ (resp., $w$-$\fd_R(M)$) the flat (resp., $w$-flat) dimension of $M$, then
\begin{center}
$\tau_q$-$\fd_R(M)\leq w$-$\fd_R(M) \leq \fd_R(M)$.
\end{center}
Note that $\tau_q$-$\fd_R(M)= w$-$\fd_R(M)$ if $R$ is a $\WQ$-ring; $\tau_q$-$\fd_R(M)= \fd_R(M)$ if $R$ is a $\DQ$-ring.



\begin{lemma}\label{big-Tor} Let $N$ be an $R$-module and $0\rightarrow  A\rightarrow  F\rightarrow  C \rightarrow  0 $ a $\tau_q$-exact sequence of $R$-modules with $F$ a $\tau_q$-flat module. Then for any integer $n>0$,  $\Tor^R_{n+1}(C, N)$
is $\Q$-torsion if and only if so is $\Tor^R_{n} (A, N)$.
\end{lemma}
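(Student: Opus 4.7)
The plan is to pass to localizations at maximal $q$-ideals, where $\tau_q$-notions become ordinary notions, and then invoke the standard long exact sequence of $\Tor$ over the local ring.

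First I would record the local reformulation. Since $0\to A\to F\to C\to 0$ is $\tau_q$-exact, for every $\m\in q$-$\Max(R)$ the sequence
\[
0\longrightarrow A_\m\longrightarrow F_\m\longrightarrow C_\m\longrightarrow 0
\]
is an honest short exact sequence of $R_\m$-modules. Moreover, by Lemma \ref{w-coh-c-c}, condition $(8)$, the hypothesis that $F$ is $\tau_q$-flat gives that $F_\m$ is a flat $R_\m$-module, so $\Tor^{R_\m}_k(F_\m,N_\m)=0$ for every $k\geq 1$.

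Next, I would apply the long exact sequence of $\Tor^{R_\m}(-,N_\m)$ to the short exact sequence above. Because the $F_\m$-terms vanish in every positive degree, the connecting homomorphism yields an isomorphism
\[
\Tor^{R_\m}_{n+1}(C_\m,N_\m)\;\cong\;\Tor^{R_\m}_n(A_\m,N_\m)
\]
for every $n\geq 1$. Since localization commutes with $\Tor$, this rewrites as an isomorphism $\Tor^{R}_{n+1}(C,N)_\m\cong \Tor^{R}_n(A,N)_\m$ for each $\m\in q$-$\Max(R)$.

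Finally, I would use the fact (which is a direct consequence of \cite[Proposition 2.7(5)]{ZDC20}, or equivalently of Lemma \ref{w-coh-c-c}(8) applied via the zero map) that an $R$-module $M$ is $\Q$-torsion if and only if $M_\m=0$ for every $\m\in q$-$\Max(R)$. Combined with the isomorphism above, this yields at once the equivalence of $\Tor^R_{n+1}(C,N)$ being $\Q$-torsion and $\Tor^R_n(A,N)$ being $\Q$-torsion. The only subtle point, and the main thing to state carefully, is the local-to-global characterization of $\Q$-torsion modules; once that is in hand, no homological calculation is required beyond the classical Tor long exact sequence performed over $R_\m$.
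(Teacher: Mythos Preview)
Your argument is correct and mirrors the paper's proof almost verbatim: localize at each $\m\in q\text{-}\Max(R)$, use flatness of $F_\m$ in the long exact $\Tor$ sequence over $R_\m$ to obtain $\Tor^R_{n+1}(C,N)_\m\cong\Tor^R_n(A,N)_\m$, and conclude via the equivalence between being $\Q$-torsion and vanishing at all maximal $q$-ideals. The only slip is the parenthetical appeal to Lemma~\ref{w-coh-c-c}(8) for that last equivalence---that item concerns $\tau_q$-flatness, not $\Q$-torsion---but your primary citation of \cite[Proposition~2.7(5)]{ZDC20} already suffices.
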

\begin{proof}  Let $\m$ be a maximal $q$-ideal of $R$. Then $0\rightarrow  A_\m\rightarrow  F_\m\rightarrow  C_\m \rightarrow  0 $ is a short exact sequence of $R_\m$-modules. Hence we have an $R_\m$-exact sequence  $$\Tor_{R_\m}^{n+1} (F_\m, M_\m)\rightarrow\Tor_{R_\m}^{n+1}(C_\m, M_\m)\rightarrow  \Tor_{R_\m}^{n} (A_\m, M_\m)\rightarrow  \Tor_{R_\m}^{n} (F_\m, M_\m).$$
Since $F$ is $\tau_q$-flat, $F_\m$ is a flat $R_\m$-module. It follows that we have natural isomorphisms $$\Tor_{R}^{n+1}(C, M)_\m\cong \Tor_{R_\m}^{n+1}(C_\m, M_\m)\cong \Tor_{R_\m}^{n} (A_\m, M_\m)\cong \Tor_{R}^{n} (A, M)_\m.$$
Consequently, $\Tor^R_{n+1}(C, N)$
	is $\Q$-torsion if and only if so is $\Tor^R_{n} (A, N)$.
\end{proof}

\begin{proposition}\label{w-phi-flat d} Let $n$ be a non-negative integer. Then the following are equivalent for an $R$-module $M.$
\begin{enumerate}
\item $\tau_q$-$\fd_R(M)\leq n$.
\item $\Tor^R_{n+k}(M, N)$ is $\Q$-torsion for all $R$-modules $N$ and all $k > 0$.
\item $\Tor^R_{n+1}(M, N)$ is $\Q$-torsion for all $R$-modules $N$.
\item If $0 \rightarrow  F_n \rightarrow  F_{n-1} \rightarrow  \cdots \rightarrow  F_1 \rightarrow  F_0 \rightarrow  M \rightarrow  0$ is a $\tau_q$-exact sequence,
where $F_0, F_1, \dots , F_{n-1}$ are $\tau_q$-flat $R$-modules, then $F_n$ is $\tau_q$-flat.
\item If $0 \rightarrow  F_n \rightarrow  F_{n-1} \rightarrow  \cdots \rightarrow  F_1 \rightarrow  F_0 \rightarrow  M \rightarrow  0$ is a $\tau_q$-exact sequence,
where $F_0, F_1, \dots , F_{n-1}$ are flat $R$-modules, then $F_n$ is $\tau_q$-flat.
\item If $0 \rightarrow  F_n \rightarrow  F_{n-1} \rightarrow  \cdots \rightarrow  F_1 \rightarrow  F_0 \rightarrow  M \rightarrow  0$ is an exact sequence, where $F_0, F_1, \dots , F_{n-1}$ are $\tau_q$-flat $R$-modules, then $F_n$ is $\tau_q$-flat.
\item If $0 \rightarrow  F_n \rightarrow  F_{n-1} \rightarrow  \cdots \rightarrow  F_1 \rightarrow  F_0 \rightarrow  M \rightarrow  0$ is an exact sequence, where $F_0, F_1, \dots , F_{n-1}$ are flat $R$-modules, then $F_n$ is $\tau_q$-flat.
\item There is an exact sequence $0 \rightarrow  F_n \rightarrow  F_{n-1} \rightarrow  \cdots \rightarrow  F_1 \rightarrow  F_0 \rightarrow  M \rightarrow  0$ where $F_0, F_1, \dots , F_{n-1}$ are flat $R$-modules and $F_n$ is $\tau_q$-flat.
\item There is a  $\tau_q$-exact sequence $0 \rightarrow  F_n \rightarrow  F_{n-1} \rightarrow  \cdots \rightarrow  F_1 \rightarrow  F_0 \rightarrow  M \rightarrow  0$ where $F_0, F_1, \dots , F_{n-1}$ are flat $R$-modules and $F_n$ is $\tau_q$-flat.
\end{enumerate}
\end{proposition}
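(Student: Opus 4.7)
The plan is to establish the nine conditions by a cycle of implications anchored on two already-proved tools: Lemma \ref{w-coh-c-c}(3), which characterizes $\tau_q$-flat modules as those whose $\Tor_1^R(-,N)$ is $\Q$-torsion for every $N$, together with Lemma \ref{w-coh-c-c}(4), saying higher $\Tor$'s against a $\tau_q$-flat module are also $\Q$-torsion; and Lemma \ref{big-Tor}, which supplies a dimension-shift along any $\tau_q$-exact sequence with a $\tau_q$-flat middle term. The route I would take is $(1)\Rightarrow(2)\Rightarrow(3)$, then $(3)$ as a hub that implies each of $(4)$--$(9)$, with $(7)\Rightarrow(1)$ and $(9)\Rightarrow(1)$ closing the cycle.

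For $(1)\Rightarrow(2)$, given a $\tau_q$-flat $\tau_q$-resolution $0\to F_n\to\cdots\to F_0\to M\to 0$, I split it into short $\tau_q$-exact sequences $0\to K_{i+1}\to F_i\to K_i\to 0$ with $K_0=M$ and $K_n=F_n$. Iterating Lemma \ref{big-Tor} along these pieces identifies $\Tor_{n+k}^R(M,N)$ with $\Tor_k^R(F_n,N)$ up to $\Q$-torsion, and Lemma \ref{w-coh-c-c}(4) kills the latter for all $k\geq 1$. The implication $(2)\Rightarrow(3)$ is trivial.

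For the collection $(3)\Rightarrow (4),(5),(6),(7)$, I would handle all four hypotheses uniformly. Given any resolution of the type listed (whether $\tau_q$-exact or exact, with first $n$ terms flat or $\tau_q$-flat), the first $n$ terms are in particular $\tau_q$-flat and the resolution is in particular $\tau_q$-exact, so its splitting into short $\tau_q$-exact pieces $0\to K_{i+1}\to F_i\to K_i\to 0$ has $\tau_q$-flat middle terms. Iterating Lemma \ref{big-Tor} then yields that $\Tor_1^R(F_n,N)$ is $\Q$-torsion iff $\Tor_{n+1}^R(M,N)$ is, and the latter holds by $(3)$; Lemma \ref{w-coh-c-c}(3) concludes that $F_n$ is $\tau_q$-flat. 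Conditions $(8)$ and $(9)$ are handled similarly: I choose any genuine flat resolution of $M$ (which exists since $R$ is an ordinary ring), truncate at stage $n$ to get an exact sequence with $F_0,\ldots,F_{n-1}$ flat, and apply the same dimension-shift argument to show the $n$-th kernel is $\tau_q$-flat, producing the resolution demanded by $(8)$; $(8)\Rightarrow(9)$ is then automatic because every exact sequence is $\tau_q$-exact.

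Finally, for $(7)\Rightarrow(1)$ and $(9)\Rightarrow(1)$, a truncated flat resolution furnishes a candidate $0\to F_n\to F_{n-1}\to\cdots\to F_0\to M\to 0$ (exact with $F_i$ flat for $i<n$); by hypothesis the terminal term $F_n$ is forced to be $\tau_q$-flat, which is exactly the definition of $\tau_q$-$\fd_R(M)\leq n$ after observing flat implies $\tau_q$-flat and exact implies $\tau_q$-exact. The main obstacle will be the bookkeeping in the dimension-shift step: one must verify that each splitting $0\to K_{i+1}\to F_i\to K_i\to 0$ of a merely $\tau_q$-exact resolution is again $\tau_q$-exact, so that Lemma \ref{big-Tor} applies at every stage; this is a straightforward check at each $\m\in q$-$\Max(R)$ using that localization turns $\tau_q$-exact sequences into genuinely exact $R_\m$-sequences and $\tau_q$-flat modules into flat $R_\m$-modules by Lemma \ref{w-coh-c-c}(8).
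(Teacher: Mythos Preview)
Your proposal is correct and follows essentially the same approach as the paper: both arguments rest on Lemma~\ref{w-coh-c-c} to characterize $\tau_q$-flatness via $\Q$-torsion of $\Tor$, and on the dimension-shift provided by Lemma~\ref{big-Tor} applied to the short $\tau_q$-exact pieces of a resolution. The only differences are organizational: the paper proves $(1)\Rightarrow(2)$ by induction on $n$ rather than by a single $n$-fold shift, records the trivial implications $(4)\Rightarrow(5)\Rightarrow(7)$ and $(4)\Rightarrow(6)\Rightarrow(7)$ explicitly rather than folding them into your uniform treatment, and reaches $(8)$ via $(7)$ rather than directly from $(3)$; the underlying constructions are identical.
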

\begin{proof}
$(2) \Rightarrow(3)$, $(4)\Rightarrow(5)\Rightarrow(7)$, $(4)\Rightarrow(6)\Rightarrow(7)$ and $(8) \Rightarrow(9)\Rightarrow(1)$:  Trivial.	
	
$(1) \Rightarrow(2)$: We will prove $(2)$ by induction on $n$. Suppose $n = 0$. Then
(2) holds by Lemma \ref{w-coh-c-c} as $M$ is $\tau_q$-flat. If $n>0$, then
there is a $\tau_q$-exact sequence  $0 \rightarrow F_n \rightarrow ...\rightarrow F_1\rightarrow F_0\rightarrow M\rightarrow 0$,
where each $F_i$ is  $\tau_q$-flat for $i=0,...,n-1$ and $F_n$ is $\tau_q$-flat. Set $K_0 = \ker(F_0\rightarrow M)$. Then both
$0 \rightarrow  K_0 \rightarrow  F_0 \rightarrow  M \rightarrow  0 $ and $0 \rightarrow  F_n \rightarrow  F_{n-1} \rightarrow...\rightarrow  F_1 \rightarrow  K_0 \rightarrow  0$ are $\tau_q$-exact, and $\tau_q$-fd$_R(K_0)\leq n-1$. By induction, $\Tor^R_{n-1+k}(K_0, N)$ is $\Q$-torsion
for all  $R$-modules $N$ and all $k > 0$. Thus, it follows from Lemma \ref{big-Tor} that $\
\Tor^R_{n+k}(M, N)$ is $\Q$-torsion.

$(3)\Rightarrow(4)$  Set $L_n = F_n$ and $L_i = \Im(F_i \rightarrow  F_{i-1})$, where $i = 1, \dots, n - 1$.
Then both $0 \rightarrow  L_{i+1} \rightarrow  F_i \rightarrow  L_i \rightarrow  0$ and $0 \rightarrow  L_1 \rightarrow  F_0 \rightarrow  M \rightarrow  0$ are $\tau_q$-exact sequences. By using Lemma \ref{big-Tor} repeatedly, we can obtain that $\Tor^R_1 (F_n, N)$ is
$\Q$-torsion for all $R$-modules $N$. Thus $F_n$ is $\tau_q$-flat.

$(7) \Rightarrow(8)$: Since every $R$-module has a flat cover, we can induce a long exact sequence  $$\cdots \rightarrow  F'_n \rightarrow  F_{n-1} \xrightarrow{d_{n-1}}  F_{n-2} \rightarrow  \cdots \rightarrow  F_1 \rightarrow  F_0 \rightarrow  M \rightarrow  0$$ with each term flat. Setting $F_n=\Ker(d_{n-1})$, we have $F_n$ is $\tau_q$-flat by $(7)$.
\end{proof}

\begin{proposition}\label{local-q-fd} Let $M$ be an $R$-module. Then
\begin{enumerate}
\item $\tau_q$-$\fd_R(M)\leq n$ if and only if $\fd_{R_\m}(M_\m)\leq n$  for all $\m\in q$-$\Max(R)$.
\item  $\tau_q$-$\fd_R(M)=\sup\{\fd_{R_\m}(M_\m) \mid \m \in \tau_q$-$\Max(R)\}$.
\end{enumerate}
\end{proposition}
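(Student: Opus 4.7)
The plan is to derive both statements from the Tor-vanishing characterization proved in Proposition \ref{w-phi-flat d} combined with the localization description of $\Q$-torsion. Since part (2) is just the supremum reformulation of part (1), the whole content is in (1), and (1) for $n=0$ is already Lemma \ref{w-coh-c-c}(8). So I only need to upgrade that $n=0$ case to arbitrary $n$ by passing through the Tor criterion.

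First I would record the key fact that an $R$-module $T$ is $\Q$-torsion if and only if $T_\m = 0$ for every $\m\in q\mbox{-}\Max(R)$. This is a consequence of \cite[Proposition 2.7(5)]{ZDC20}, which the excerpt already quotes: a homomorphism is a $\tau_q$-isomorphism iff its kernel and cokernel are $\Q$-torsion iff it localizes to an isomorphism at every maximal $q$-ideal; applying this to the zero map $T\to 0$ gives the stated equivalence. Combined with the standard flat base-change isomorphism
\[
\Tor^R_{n+1}(M,N)_\m \;\cong\; \Tor^{R_\m}_{n+1}(M_\m, N_\m),
\]
we get: $\Tor^R_{n+1}(M,N)$ is $\Q$-torsion for all $R$-modules $N$ if and only if $\Tor^{R_\m}_{n+1}(M_\m, N_\m) = 0$ for all $R$-modules $N$ and all $\m \in q\mbox{-}\Max(R)$.

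For the direction $\tau_q\mbox{-}\fd_R(M)\le n \Rightarrow \fd_{R_\m}(M_\m)\le n$, I would apply Proposition \ref{w-phi-flat d}(3) to conclude $\Tor^R_{n+1}(M,N)$ is $\Q$-torsion for all $N$, hence $\Tor^{R_\m}_{n+1}(M_\m, N_\m)=0$ for all $N$ and all $\m$. Given any $R_\m$-module $N'$, view it as an $R$-module; then $N'_\m = N'$, so $\Tor^{R_\m}_{n+1}(M_\m, N') = 0$ for every $R_\m$-module $N'$, which gives $\fd_{R_\m}(M_\m)\le n$. Conversely, if each $\fd_{R_\m}(M_\m)\le n$, then $\Tor^{R_\m}_{n+1}(M_\m, N_\m)=0$ for any $R$-module $N$ and any $\m$, so $\Tor^R_{n+1}(M,N)$ vanishes at every maximal $q$-ideal and is therefore $\Q$-torsion; Proposition \ref{w-phi-flat d}(3)$\Rightarrow$(1) yields $\tau_q\mbox{-}\fd_R(M)\le n$. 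Part (2) is then obtained by taking suprema on both sides of (1) and letting $n = \tau_q\mbox{-}\fd_R(M)$.

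There is no serious obstacle; the only thing to be mindful of is the subtle switch between quantifying over $R$-modules and over $R_\m$-modules when extracting flat dimension at the localization, which is handled as above by viewing an $R_\m$-module as an $R$-module (with trivial localization). If one preferred not to use the Tor criterion, one could alternatively argue by induction on $n$ using a short $\tau_q$-exact sequence $0\to K\to F\to M\to 0$ with $F$ free, localizing to $0\to K_\m\to F_\m\to M_\m\to 0$ and invoking the standard fact $\fd_{R_\m}(M_\m)\le n \Leftrightarrow \fd_{R_\m}(K_\m)\le n-1$; but the Tor route is cleaner and directly reuses Proposition \ref{w-phi-flat d}.
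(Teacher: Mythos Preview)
Your proposal is correct and follows essentially the same route as the paper: both directions go through Proposition \ref{w-phi-flat d}(3), the flat base-change isomorphism $\Tor^R_{n+1}(M,N)_\m \cong \Tor^{R_\m}_{n+1}(M_\m,N_\m)$, and the identification of $\Q$-torsion with vanishing at every maximal $q$-ideal. Your write-up is slightly more explicit than the paper's about the quantifier switch between $R$-modules and $R_\m$-modules, but the argument is the same.
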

\begin{proof} (1) Suppose  $\tau_q$-$\fd_R(M)\leq n$. Let  $\m\in  q$-$\Max(R)$ and $N$ an $R_\m$-module. Then  $\Tor^R_{n+1}(M, N)$ is $\Q$-torsion, and so $\Tor_{R_\m}^{n+1}(M_\m, N)=0$. Hence  $\fd_{R_\m}(M_\m)\leq n$. On the other hand, let $N$ be an $R$-module.  For any $\m\in q$-$\Max(R)$, we have $0=\Tor_{R_\m}^{n+1}(M_\m, N_\m)\cong\Tor^R_{n+1}(M, N)_\m$. So $\Tor^R_{n+1}(M, N)$ is $\Q$-torsion, and hence $\tau_q$-$\fd_R(M)\leq n$ by Proposition \ref{w-phi-flat d}.

 (2) It follows by (1).
\end{proof}

\begin{corollary}\label{qfd-exac} Let $0\rightarrow A\rightarrow B\rightarrow C\rightarrow 0$ be a $\tau_q$-short exact sequence of $R$-modules \cite[Theorem 3.6.7]{fk16}. 
\begin{enumerate}
		\item Then $\tau_q$-$\fd_R(C)\leq 1+\max\{\tau_q$-$\fd_R(A),\tau_q$-$\fd_R(B)\} $.
		\item  If $\tau_q$-$\fd_R(B)<\tau_q$-$\fd_R(C)$, then 
		$\tau_q$-$\fd_R(A)=\tau_q$-$\fd_R(C)-1\geq \tau_q$-$\fd_R(B)$.
\end{enumerate}
\end{corollary}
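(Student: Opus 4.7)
The plan is to reduce both assertions to the classical homological statements about flat dimensions in short exact sequences over local rings, and then pass to the supremum via Proposition \ref{local-q-fd}. Fix any $\m\in q$-$\Max(R)$. Since $0\to A\to B\to C\to 0$ is $\tau_q$-exact, localization at $\m$ yields a genuine short exact sequence $0\to A_\m\to B_\m\to C_\m\to 0$ of $R_\m$-modules. For such a sequence the standard Tor long-exact-sequence arguments give
$$\fd_{R_\m}(C_\m)\leq 1+\max\{\fd_{R_\m}(A_\m),\fd_{R_\m}(B_\m)\},\qquad \fd_{R_\m}(A_\m)\leq\max\{\fd_{R_\m}(B_\m),\fd_{R_\m}(C_\m)-1\},$$
and moreover, whenever $\fd_{R_\m}(B_\m)<\fd_{R_\m}(C_\m)$, one has $\fd_{R_\m}(A_\m)=\fd_{R_\m}(C_\m)-1$.

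For part (1), I would simply take the supremum over all $\m\in q$-$\Max(R)$ of the first inequality above and invoke Proposition \ref{local-q-fd}(2), yielding $\tau_q$-$\fd_R(C)\leq 1+\max\{\tau_q$-$\fd_R(A),\tau_q$-$\fd_R(B)\}$. Alternatively, this can be derived directly from Proposition \ref{w-phi-flat d}(3) by chasing the Tor long exact sequence associated to the $\tau_q$-exact triple and noting that the class of $\Q$-torsion modules is closed under extensions and quotients.

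For part (2), set $a=\tau_q$-$\fd_R(A)$, $b=\tau_q$-$\fd_R(B)$, $c=\tau_q$-$\fd_R(C)$ and assume $b<c$. If $c=\infty$, then $b$ must be finite, and part (1) forces $a=\infty$, so the claim holds under the convention $\infty-1=\infty$. Suppose $c<\infty$. Because the local flat dimensions $\fd_{R_\m}(C_\m)$ are integer-valued and bounded by $c$, the supremum defining $c$ is attained: there exists $\m_0\in q$-$\Max(R)$ with $\fd_{R_{\m_0}}(C_{\m_0})=c$. At that particular $\m_0$ one has $\fd_{R_{\m_0}}(B_{\m_0})\leq b<c=\fd_{R_{\m_0}}(C_{\m_0})$, so the classical equality gives $\fd_{R_{\m_0}}(A_{\m_0})=c-1$, hence $a\geq c-1$. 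Conversely, for every $\m$ the local inequality $\fd_{R_\m}(A_\m)\leq\max\{\fd_{R_\m}(B_\m),\fd_{R_\m}(C_\m)-1\}\leq\max\{b,c-1\}=c-1$ holds, so $a\leq c-1$. Combining gives $a=c-1\geq b$.

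The main obstacle is the sup-attainment step in part (2): the strict inequality $b<c$ between suprema does not immediately translate to a pointwise strict inequality $\fd_{R_\m}(B_\m)<\fd_{R_\m}(C_\m)$ at each $\m$, so one cannot apply the classical equality maximal-ideal-by-maximal-ideal. The remedy is to exploit the integer-valuedness of the local flat dimensions to locate a single $\m_0$ where the supremum is realized; everything else is then routine once the local statements and Proposition \ref{local-q-fd} are in hand.
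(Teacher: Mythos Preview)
Your proof is correct and follows essentially the same route as the paper: localize the $\tau_q$-exact sequence at each $\m\in q$-$\Max(R)$, invoke the classical flat-dimension inequalities for short exact sequences (the paper cites \cite[Theorem 3.6.7]{fk16}), and pass to the supremum via Proposition \ref{local-q-fd}. Your treatment of part (2) is in fact more careful than the paper's, which simply defers to the cited references without spelling out the sup-attainment argument you correctly identify as the one nontrivial step.
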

\begin{proof} Since $0\rightarrow A\rightarrow B\rightarrow C\rightarrow 0$ is a $\tau_q$-short exact sequence, 
$0\rightarrow A_\m\rightarrow B_\m\rightarrow C_\m\rightarrow 0$ is a short exact sequence of $R_\m$-modules for any $\m\in q$-$\Max(R)$. So the results follows by Proposition \ref{local-q-fd} and \cite[Theorem 3.6.7]{fk16}.
\end{proof}
\begin{corollary}\label{qfd-ds} Let $\{M_i\mid i\in \Gamma\}$ be a family of  $R$-modules.  Then
	\begin{center}
		$\tau_q$-$\fd_R(\bigoplus\limits_{ i\in \Gamma}M_i)=\sup\{\tau_q$-$\fd_R(M_i)\mid i\in \Gamma\}.$
	\end{center}	
\end{corollary}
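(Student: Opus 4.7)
The plan is to reduce to the known classical statement about flat dimensions of direct sums of modules over a local ring by localizing at maximal $q$-ideals. The main tool is Proposition \ref{local-q-fd}(2), which expresses the $\tau_q$-flat dimension as a supremum of ordinary flat dimensions over localizations at $\m \in q$-$\Max(R)$.

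First, I would fix $\m \in q$-$\Max(R)$ and use that localization commutes with arbitrary direct sums, so that
$$\left(\bigoplus_{i\in\Gamma}M_i\right)_\m \;\cong\; \bigoplus_{i\in\Gamma}(M_i)_\m$$
as $R_\m$-modules. Then I would invoke the classical fact that for any family of modules over a ring, the flat dimension of the direct sum equals the supremum of the flat dimensions of its summands, giving
$$\fd_{R_\m}\left(\bigoplus_{i\in\Gamma}(M_i)_\m\right) \;=\; \sup\{\fd_{R_\m}((M_i)_\m) \mid i \in \Gamma\}.$$

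Second, combining this with Proposition \ref{local-q-fd}(2), I would write
$$\tau_q\text{-}\fd_R\left(\bigoplus_{i\in\Gamma}M_i\right) = \sup_{\m}\fd_{R_\m}\left(\bigoplus_{i\in\Gamma}(M_i)_\m\right) = \sup_{\m}\sup_{i\in\Gamma}\fd_{R_\m}((M_i)_\m),$$
where $\m$ ranges over $q$-$\Max(R)$. Swapping the two suprema (which is always permissible) and applying Proposition \ref{local-q-fd}(2) once more in the reverse direction yields
$$\sup_{i\in\Gamma}\sup_{\m}\fd_{R_\m}((M_i)_\m) = \sup\{\tau_q\text{-}\fd_R(M_i) \mid i \in \Gamma\},$$
which is the desired equality.

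There is no real obstacle here; the argument is entirely formal once one has the local characterization from Proposition \ref{local-q-fd}. The only thing worth noting is that one must interpret $\sup$ appropriately when the dimensions are infinite (so that the equality still holds in $\mathbb{N}\cup\{\infty\}$), but this is standard.
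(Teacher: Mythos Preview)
Your proof is correct and follows essentially the same approach as the paper: localize at each $\m\in q\text{-}\Max(R)$, use that localization commutes with direct sums, and invoke Proposition~\ref{local-q-fd}. The paper's own proof is a two-line version of exactly what you wrote, leaving implicit both the classical direct-sum formula for flat dimension and the interchange of suprema that you spelled out.
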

\begin{proof} Note that $(\bigoplus\limits_{ i\in \Gamma}M_i)_\m\cong \bigoplus\limits_{ i\in \Gamma}(M_i)_\m$ for any $\m\in q$-$\Max(R)$. So the result follows by Proposition \ref{local-q-fd}.
\end{proof}
\begin{corollary}\label{qfd-q-iso} Let $M$ and $N$ be $R$-modules such that $M$ is $\tau_q$-isomorphic to $N$. Then $\tau_q$-$\fd_R(M)=\tau_q$-$\fd_R(N)$.
\end{corollary}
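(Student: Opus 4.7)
The plan is to reduce the statement to the local characterization of $\tau_q$-flat dimension given in Proposition \ref{local-q-fd}(2), which says
\[
\tau_q\text{-}\fd_R(M)=\sup\{\fd_{R_\m}(M_\m)\mid \m\in q\text{-}\Max(R)\}.
\]
Since the right-hand side depends only on the $R_\m$-module isomorphism class of the localizations $M_\m$ over the maximal $q$-ideals $\m$, it suffices to show that a $\tau_q$-isomorphism between $M$ and $N$ produces genuine $R_\m$-module isomorphisms between $M_\m$ and $N_\m$ for every $\m\in q\text{-}\Max(R)$.

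To this end, fix an $R$-homomorphism $f\colon M\to N$ realizing the $\tau_q$-isomorphism. By the equivalence recalled from \cite{ZDC20} in Section~2, both $\Ker(f)$ and $\Cok(f)$ are $\Q$-torsion. By definition of $\tau_q$-isomorphism (via localization at maximal $q$-ideals), this is precisely the statement that $f_\m\colon M_\m\to N_\m$ is an $R_\m$-isomorphism for every $\m\in q\text{-}\Max(R)$. Consequently $\fd_{R_\m}(M_\m)=\fd_{R_\m}(N_\m)$ for all such $\m$.

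Taking the supremum over $\m\in q\text{-}\Max(R)$ and invoking Proposition \ref{local-q-fd}(2) on both sides yields $\tau_q\text{-}\fd_R(M)=\tau_q\text{-}\fd_R(N)$, which is the desired conclusion. There is no genuine obstacle: the whole argument is a one-line application of the local formula, and the only thing to verify carefully is that the term ``$\tau_q$-isomorphic'' is interpreted via a map whose kernel and cokernel are $\Q$-torsion, so that its localizations at maximal $q$-ideals become honest isomorphisms.
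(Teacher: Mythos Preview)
Your proof is correct and follows essentially the same route as the paper's: both arguments observe that a $\tau_q$-isomorphism between $M$ and $N$ yields genuine $R_\m$-isomorphisms $M_\m\cong N_\m$ for every $\m\in q\text{-}\Max(R)$, and then invoke Proposition~\ref{local-q-fd} to conclude. The only cosmetic difference is that the paper allows the witnessing map to go in either direction ($M\to N$ or $N\to M$), whereas you fix $f\colon M\to N$; since the argument is symmetric this does not matter.
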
 
\begin{proof} Since $M$ is $\tau_q$-isomorphic to $N$, then there is an exact sequence of $R$-modules  $0\rightarrow K_1\rightarrow M\rightarrow N\rightarrow C_1\rightarrow 0$ or $0\rightarrow K_2\rightarrow N\rightarrow M\rightarrow C_2\rightarrow 0$ such that $K_i$ and $C_i$ are $\Q$-torsion $(i=1,2)$. So $M_\m$ isomorphic to $N_\m$ for any $\m\in q$-$\Max(R)$. 
Hence $\tau_q$-$\fd_R(M)=\tau_q$-$\fd_R(N)$ by Proposition \ref{local-q-fd}.
\end{proof}

The author in \cite{WQ15} also introduce the weak global dimension of a given ring. We can similarly introduce the $\tau_q$-weak global dimension of a ring $R$.
\begin{definition} The $\tau_q$-weak global dimension of a ring $R$ is defined by
\begin{center}
$\tau_q$-\cwd$(R) = \sup\{\tau_q$-$\fd_R(M)\mid M$ is an $R$-module$\}.$
\end{center}
\end{definition}
Obviously, if we denote the weak (resp., $w$-weak) global dimension of a ring $R$ by
\cwd$(R)$ (resp., $w$-\cwd$(R)$), then
\begin{center}
$\tau_q$-$\cwd(R)\leq w$-$\cwd(R) \leq \cwd(R)$.
\end{center}
Note that $\tau_q$-$\cwd(R)= w$-$\cwd(R)$ if $R$ is a $\WQ$-ring; $\tau_q$-$\cwd(R)= \cwd(R)$ if $R$ is a $\DQ$-ring.

The following result characterizes rings with $\tau_q$-weak global dimensions at most $n$.
\begin{theorem}\label{q-dim} The following statements are equivalent for $R.$
\begin{enumerate}
\item	$\tau_q$-$\cwd(R)\leq n$.
\item $\tau_q$-\fd$_R(M)\leq n$ for all $R$-modules $M.$
\item  $\Tor^R_{n+k}(M, N)$ is $\Q$-torsion  for all $R$-modules $M$ and $N$ and all $k > 0.$
\item $\Tor^R_{n+1}(M, N)$ is $\Q$-torsion for all $R$-modules $M$ and $N.$
\item $\tau_q$-\fd$_R(R/I)\leq n$ for all ideals $I$ of $R$.
\item $\tau_q$-\fd$_R(R/I)\leq n$ for all $\tau_q$-finitely generated ideals $I$ of $R.$
\item$\tau_q$-\fd$_R(R/I)\leq n$ for all finitely generated ideals $I$ of $R.$
\end{enumerate}
Consequently, the $\tau_q$-weak global dimension of $R$ is also determined by the
formulas:
\begin{align*}
\tau_q\mbox{-}\cwd(R)&=\sup\{\cwd(R_\m)\mid \m \in\tau_q\mbox{-}\Max(R)\} \\
&= \sup \{\tau_q\mbox{-}\fd_R(R/I) |\ I\ \mbox{is\ an\ ideal\ of}\ R\}\\
&= \sup \{\tau_q\mbox{-}\fd_R(R/I) |\ I\ \mbox{is\ a}\ \tau_q\mbox{-}\mbox{finite\ type\ ideal\ of}\ R\} \\
&= \sup \{\tau_q\mbox{-}\fd_R(R/I) |\ I\ \mbox{is\ a\ finitely\ generated\ ideal\ of}\ R\}.
\end{align*}
\end{theorem}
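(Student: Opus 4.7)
The plan is to reduce every condition to a statement about the localizations $R_\m$ for $\m\in q\text{-}\Max(R)$ via Proposition~\ref{local-q-fd}, and to switch between $\tau_q$-flat dimension and vanishing of $\Tor$ up to $\Q$-torsion via Proposition~\ref{w-phi-flat d}. Once these two translations are in place, the theorem is essentially a $\tau_q$-colored repackaging of the classical characterization of weak global dimension.

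First, I would note that (1)$\Leftrightarrow$(2) is immediate from the definition of $\tau_q$-$\cwd(R)$. Next, (2)$\Leftrightarrow$(3)$\Leftrightarrow$(4) follows by applying the equivalences $(1)\Leftrightarrow(2)\Leftrightarrow(3)$ of Proposition~\ref{w-phi-flat d} uniformly to every $R$-module $M$. The chain (2)$\Rightarrow$(5)$\Rightarrow$(6)$\Rightarrow$(7) is trivial: (5) is just (2) restricted to cyclic modules, every finitely generated ideal is $\tau_q$-finitely generated, so $(5)\Rightarrow(6)$, and $(6)\Rightarrow(7)$ is specialization to the subclass of ordinary finitely generated ideals.

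The substantive implication is (7)$\Rightarrow$(2). Fix $\m\in q\text{-}\Max(R)$. By Proposition~\ref{local-q-fd}(1), the hypothesis (7) gives $\fd_{R_\m}(R_\m/I_\m)\leq n$ for every finitely generated ideal $I$ of $R$. Since every finitely generated ideal of $R_\m$ has the form $I_\m$ for some finitely generated ideal $I$ of $R$, the classical formula for weak global dimension of a commutative ring (i.e.\ $\cwd(S)=\sup\{\fd_S(S/J):J\text{ finitely generated}\}$) yields $\cwd(R_\m)\leq n$. Consequently $\fd_{R_\m}(M_\m)\leq n$ for every $R$-module $M$, and a second application of Proposition~\ref{local-q-fd}(1) promotes this to $\tau_q$-$\fd_R(M)\leq n$, proving (2).

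The closing displayed formulas are then immediate: the first line $\tau_q$-$\cwd(R)=\sup\{\cwd(R_\m)\mid \m\in q\text{-}\Max(R)\}$ is Proposition~\ref{local-q-fd}(2) taken as a supremum over all $R$-modules $M$, and the remaining three lines are (1)$\Leftrightarrow$(5)$\Leftrightarrow$(6)$\Leftrightarrow$(7) rewritten as suprema. The only nontrivial point is the finitely-generated lifting step in (7)$\Rightarrow$(2); everything else is bookkeeping between localization and $\Q$-torsion.
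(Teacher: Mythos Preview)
Your proof is correct, but the key step $(7)\Rightarrow(2)$ follows a genuinely different route from the paper. The paper argues directly at the level of $\Tor$: given an arbitrary $R$-module $M$, take a flat resolution, set $F_n$ to be the $n$-th syzygy, and use dimension shifting together with Lemma~\ref{w-coh-c-c}(7) to conclude that $\Tor^R_1(R/I,F_n)\cong\Tor^R_{n+1}(R/I,M)$ is $\Q$-torsion for every finitely generated $I$, whence $F_n$ is $\tau_q$-flat. You instead pass through localization: Proposition~\ref{local-q-fd} converts (7) into $\fd_{R_\m}(R_\m/I_\m)\leq n$ for all finitely generated $I$, every finitely generated ideal of $R_\m$ arises this way, so the classical formula gives $\cwd(R_\m)\leq n$, and Proposition~\ref{local-q-fd} again lifts this back to (2). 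Your approach has the advantage of yielding the first displayed formula $\tau_q\text{-}\cwd(R)=\sup\{\cwd(R_\m)\}$ as a byproduct of the main argument rather than as a separate observation; the paper's approach stays internal to the $\tau_q$-torsion framework and avoids invoking the classical weak global dimension formula as a black box. One expository nit: your justification ``every finitely generated ideal is $\tau_q$-finitely generated, so $(5)\Rightarrow(6)$'' is misplaced---that fact underlies $(6)\Rightarrow(7)$, whereas $(5)\Rightarrow(6)$ holds simply because $\tau_q$-finitely generated ideals form a subclass of all ideals.
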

\begin{proof}
	
	$(1) \Leftrightarrow (2)$ and $(2) \Rightarrow  (5) \Rightarrow  (6) \Rightarrow  (7) $:  Trivial.
	
	$(2) \Leftrightarrow  (3)\Leftrightarrow  (4)$: Follows from Proposition \ref{w-phi-flat d}.
	
	$(4)\Rightarrow  (5)$:  Trivial.
	
	$(7) \Rightarrow  (1)$: Let $M$ be an $R$-module and $0 \rightarrow F_n \rightarrow ...\rightarrow F_1\rightarrow F_0\rightarrow M\rightarrow 0$ an exact sequence, where $F_0, F_1, . . . , F_{n-1}$ are flat $R$-modules.
	To complete the proof, it suffices, by Proposition \ref{w-phi-flat d}, to prove that $F_n$ is
	$\tau_q$-flat. Let $I$ be a finitely generated ideal of $R$.
	Thus $\tau_q$-fd$_R(R/I)\leq n$ by (7). It follows from Lemma \ref{big-Tor} that $\Tor^R_1 (R/I, F_n)\cong \Tor^R_{n+1}(R/I, M)$
	is $\Q$-torsion.

The first equality of the consequence  follows by Proposition \ref{local-q-fd}, and the others follow by the above of this Theorem.
\end{proof}

\begin{proposition}\label{fd-r} Let $R=R_1\times R_2\times\cdots R_n$ be a finite direct product of rings. Then
\begin{center}
	 $\tau_q$-$\cwd(R)=\max\{\tau_q$-$\cwd(R_i)\mid i=1,2,\dots,n\}$.
\end{center}
\end{proposition}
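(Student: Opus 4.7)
The plan is to reduce the statement to the binary case $n = 2$ by induction and then apply the local characterization $\tau_q$-$\cwd(R) = \sup\{\cwd(R_\m) \mid \m \in q$-$\Max(R)\}$ from Theorem \ref{q-dim}. For $R = R_1\times R_2$, I must therefore identify $q$-$\Max(R)$ in terms of the maximal $q$-ideals of the two factors and compute the localization at each such ideal.

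First, every ideal of $R$ splits as $I = I_1\times I_2$ via the orthogonal idempotents $(1,0)$ and $(0,1)$. The key calculation is that an ideal $I_1\times I_2$ is semi-regular in $R$ if and only if each $I_i$ is semi-regular in $R_i$: for a finitely generated sub-ideal generated by elements $(a_j,b_j)$, the annihilator in $R$ factors as $\Ann_{R_1}(\{a_j\})\times\Ann_{R_2}(\{b_j\})$, so such a sub-ideal is dense iff its two coordinate projections are dense. Using this, I would show that the maximal $q$-ideals of $R$ are exactly the ideals $\m_1\times R_2$ with $\m_1\in q$-$\Max(R_1)$ and $R_1\times\m_2$ with $\m_2\in q$-$\Max(R_2)$: if $I_1\times I_2$ is non-semi-regular then some $I_i$ is non-semi-regular, and replacing the other coordinate by the full factor keeps it non-semi-regular while only enlarging it, so maximality forces the other coordinate to equal the whole ring; maximality of the surviving component as a $q$-ideal of its own factor then follows by the same enlargement argument.

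Next, for $\m = \m_1\times R_2\in q$-$\Max(R)$ I would verify $R_\m\cong (R_1)_{\m_1}$: the idempotent $(0,1)$ lies in $\m$ while $(1,0)\notin \m$ becomes a unit in $R_\m$, and the relation $(0,1)(1,0)=0$ forces $(0,1)/1 = 0$ in $R_\m$; hence the projection $R\to R_1$ induces the desired isomorphism. Combining these steps with Theorem \ref{q-dim} yields
\[
\tau_q\mbox{-}\cwd(R)=\sup_{\m\in q\mbox{-}\Max(R)}\cwd(R_\m)=\max_{i=1,2}\sup_{\m_i\in q\mbox{-}\Max(R_i)}\cwd((R_i)_{\m_i})=\max_{i=1,2}\tau_q\mbox{-}\cwd(R_i),
\]
and induction on $n$ gives the general formula.

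The main obstacle is the identification of $q$-$\Max(R)$; once the product-ideal criterion for semi-regularity and the localization computation are in hand, everything else reduces cleanly to Theorem \ref{q-dim}. A small technical point worth highlighting is that no ``mixed'' non-product ideal can intervene, which is immediate from the observation that every ideal of a finite direct product of commutative rings is automatically a product of ideals.
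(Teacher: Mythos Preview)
Your argument is correct but takes a different route from the paper's. Both proofs rest on the same preliminary observation---that a finitely generated ideal $I_1\times\cdots\times I_n$ of $R$ is semi-regular if and only if each $I_i$ is semi-regular in $R_i$---but they diverge in how they invoke Theorem~\ref{q-dim}. The paper uses the cyclic-module formula $\tau_q\mbox{-}\cwd(R)=\sup_I \tau_q\mbox{-}\fd_R(R/I)$: since every ideal of $R$ is a product $I_1\times\cdots\times I_n$ and $R/I\cong\prod_i R_i/I_i$, one argues (via the splitting of $\Q(R)$-torsion) that $\tau_q\mbox{-}\fd_R(R/I)\leq k$ for all $I$ holds iff $\tau_q\mbox{-}\fd_{R_i}(R_i/I_i)\leq k$ for all $I_i$ and all $i$, and the result follows. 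You instead use the localization formula $\tau_q\mbox{-}\cwd(R)=\sup_{\m\in q\text{-}\Max(R)}\cwd(R_\m)$, explicitly determine $q\mbox{-}\Max(R)$ as the set of ideals $\m_i\times\prod_{j\neq i}R_j$, and compute each $R_\m\cong(R_i)_{\m_i}$. The paper's path is shorter because it never needs to describe $q\mbox{-}\Max(R)$ or verify a localization isomorphism; on the other hand, your approach yields as a by-product the explicit structure of the maximal $q$-spectrum of a finite product, which is of independent interest and parallels the familiar description of $\Spec(R_1\times R_2)$.
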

\begin{proof}  Note that every (finitely generated semi-regular) ideal of $R$ is of the form $I=I_1\times I_2\times\cdots I_n$ where each $I_i$ is a  (finitely generated semi-regular) ideal of $R_i$. So  $\tau_q$-\fd$_R(R/I)\leq n$ for all ideals $I$ of $R$ if and only if $\tau_q$-\fd$_{R_i}(R_i/I_i)\leq n$ for all ideals $I_i$ of $R_i$ and all $i=1,2,\dots,n$. Hence the result holds by Theorem \ref{q-dim}.
\end{proof}

\section{More results on $\tau_q$-\VN\ regular rings}

Recall from \cite[Definition 4.7]{zq23} that a ring $R$ is said to be a \emph{$\tau_q$-\VN\ regular ring} $($short for  $\tau_q$-von Neumann regular ring$)$ provided that all $R$-modules are $\tau_q$-flat. Certainly, integral domains and  von Neumann regular rings are $\tau_q$-\VN\ regular. The following result gives a homological characterization of $\tau_q$-von Neumann regular rings.
\begin{proposition}\label{char-tvn}
	The following assertions are equivalent for a ring $R$.
	\begin{enumerate}
		\item $\tau_q$-$\cwd(R)=0$;
		\item $R$ is a $\tau_q$-\VN\ regular ring;
		\item  $\Tor^R_{k}(M, N)$ is $\Q$-torsion  for all $R$-modules $M$ and $N$ and all $k > 0.$
		\item $\Tor^R_{1}(M, N)$ is $\Q$-torsion for all $R$-modules $M$ and $N.$
		\item  $R/I$ is $\tau_q$-flat for all $($ $\tau_q$-finitely generated, or finitely generated$)$ ideals $I$ of $R$.
		\item for any finitely generated ideal $K$ of $R$, there exists an ideal $I\in\Q$ such that $IK=K^2$;
		\item $R_{\m}$ is a von Neumann regular ring for any $\m\in q$-$\Max(R)$;
		\item   $\T(R[x])$ is a von Neumann regular ring;
		\item    $R$ is a reduced ring and $\Min(R)$ is compact.
		\item  $Q_0(R)$ is a von Neumann regular ring.
	\end{enumerate}
\end{proposition}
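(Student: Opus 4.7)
The plan is to hang the proof on three pillars already in place: Theorem \ref{q-dim} specialised to $n = 0$, Lemma \ref{w-coh-c-c}, and the prior characterisation \cite[Theorem 4.8]{zq23}. The equivalence $(1) \Leftrightarrow (2)$ is immediate from the definition of $\tau_q$-$\cwd(R)$. Setting $n = 0$ in Theorem \ref{q-dim} delivers $(1) \Leftrightarrow (3) \Leftrightarrow (4) \Leftrightarrow (5) \Leftrightarrow (7)$ at once, on reading $\tau_q$-$\fd_R(R/I) \le 0$ as ``$R/I$ is $\tau_q$-flat''. The block $(2) \Leftrightarrow (7) \Leftrightarrow (8) \Leftrightarrow (9)$ is already proved in \cite[Theorem 4.8]{zq23} and I would invoke it wholesale.

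The substantive new ideal-theoretic step is $(5) \Leftrightarrow (6)$. For $(5) \Rightarrow (6)$, fix a finitely generated ideal $K$. Since $R/K$ is $\tau_q$-flat, Lemma \ref{w-coh-c-c}(5) applied with $M = R/K$ and the ideal $K$ gives that $R/K \otimes_R K \to (R/K)K = 0$, i.e.\ $K/K^2 \to 0$, is a $\tau_q$-isomorphism; hence $K/K^2$ is $\Q$-torsion and there exists $J \in \Q(R)$ with $JK \subseteq K^2$. Set $J' := J + K$; then $J'$ is finitely generated and contains the dense ideal $J$ (so is itself dense), whence $J' \in \Q(R)$ and $J'K = JK + K^2 = K^2$. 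For $(6) \Rightarrow (5)$, fix $\m \in q$-$\Max(R)$. As maximal $q$-ideals are exactly the maximal non-semi-regular ideals, $J \not\subseteq \m$, so $J_\m = R_\m$, and localising $JK = K^2$ yields $K_\m = K_\m^2$. Since $R_\m$ is local with maximal ideal $\m R_\m$ and $K_\m$ is finitely generated, Nakayama forces $K_\m = 0$ or $K_\m = R_\m$; either way $(R/K)_\m$ is $R_\m$-flat, so by Lemma \ref{w-coh-c-c}(8) $R/K$ is $\tau_q$-flat.

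The main obstacle is weaving in $(10)$, the von Neumann regularity of $Q_0(R)$. I would close the circle by proving $(9) \Leftrightarrow (10)$ using Lucas's classical description of $Q_0(R)$, which characterises its VN regularity precisely in terms of $R$ being reduced with $\Min(R)$ compact. A self-contained alternative is $(8) \Leftrightarrow (10)$, exploiting the inclusion chain $R \subseteq \T(R) \subseteq Q_0(R) \subseteq \T(R[x])$ together with the defining property $Q_0(R) = \{\alpha \in \T(R[x]) : I\alpha \subseteq R \text{ for some } I \in \Q(R)\}$. For $(8) \Rightarrow (10)$ one picks a weak inverse in $\T(R[x])$ of a given $\alpha \in Q_0(R)$ and pulls it back into $Q_0(R)$ by multiplying by a suitable $I \in \Q(R)$; for the converse, one transfers VN regularity upward by expressing every element of $\T(R[x])$ as a finite fraction over $Q_0(R)$. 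The fiddly bookkeeping of semi-regular denominators along these chains is the step I expect to consume the most effort and to be the principal potential source of error.
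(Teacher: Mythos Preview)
Your proposal is correct and follows essentially the paper's route: Theorem~\ref{q-dim} at $n=0$ for $(1)$--$(5)$ and $(7)$, the cited result from \cite{zq23} (numbered 4.9 there, not 4.8) for the equivalence of $(2)$ with $(6)$--$(9)$, and Lucas \cite[Theorem 7.6]{L05} for $(9)\Leftrightarrow(10)$. Your explicit Nakayama argument for $(5)\Leftrightarrow(6)$ simply unpacks what the paper leaves inside the citation, and since your primary plan for $(10)$ via Lucas is exactly what the paper does, the fiddly $(8)\Leftrightarrow(10)$ alternative you worry about is unnecessary.
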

\begin{proof} The equivalences of $(1)-(5)$ follow by Theorem \ref{q-dim}, and the equivalences of $(2)$ and $(6)-(9)$ follow from
\cite[Theorem 4.9]{zq23}.

$(9)\Leftrightarrow (10)$ Note that $R\subseteq Q_0(R)$. So the reducedness of $Q_0(R)$ implies the reducedness of $R$. Hence the equivalence   follows by  \cite[Theorem 7.6]{L05}.
\end{proof}
\begin{remark}
If a ring $R$ satisfies that $\T(R)$ is a von Neumann regular ring, then $R$ is a $\tau_q$-\VN\ regular ring, since the former is equivalent to that  $R$ is a reduced ring, $\Min(R)$ is compact,  and if a finitely generated ideal is contained in the union of the minimal primes of $R$ then it is contained in one of them (see \cite[Proposition 9]{Q71}). The author in \cite{Q71} also gave a counter-example to show the converse does not hold in general.
\end{remark}

\begin{corollary}
	Let $R$ be a ring. Then the following statements are equivalent:
	\begin{enumerate}
		\item $R$ is a von Neumann regular ring;
		\item $R$ is a $\tau_q$-von Neumann regular ring and a $\DQ$-ring;
		\item $R$ is a $\tau_q$-von Neumann regular ring and a $\DW$-ring;
		\item $R$ is a $\tau_q$-von Neumann regular ring and a $\WQ$-ring.
	\end{enumerate}		
\end{corollary}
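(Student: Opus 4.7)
The plan is to prove $(1) \Rightarrow (2), (3), (4)$ in one stroke, and then close each reverse implication by exploiting the ``collapse'' identity between $\cwd(R)$, $w\mbox{-}\cwd(R)$, and $\tau_q\mbox{-}\cwd(R)$ that holds under the corresponding hypothesis. The uniform starting point for the reverse directions is Proposition \ref{char-tvn}: $\tau_q$-VN regularity is equivalent to $\tau_q\mbox{-}\cwd(R) = 0$, so in each of $(2)$, $(3)$, $(4)$ the task is to identify $\tau_q\mbox{-}\cwd(R)$ with $\cwd(R)$.

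For $(1) \Rightarrow (2), (3), (4)$: if $R$ is von Neumann regular then $\cwd(R) = 0$, so every module is flat, hence $\tau_q$-flat, giving $\tau_q$-VN regularity. Also $\fPD(R) = 0$, so $R$ is a $\DQ$-ring by the preliminary fact cited from \cite{fkxs20}. For $\DW$ and $\WQ$, I would prove the stronger statement $\Q(R) = \{R\}$: any finitely generated ideal of a VNR ring has the form $eR$ for an idempotent $e$, with annihilator $(1-e)R$, so a semi-regular (equivalently dense) such ideal forces $1-e=0$, i.e., $eR = R$. Since $\{R\} \subseteq \GV(R) \subseteq \Q(R) = \{R\}$, both $\DW$ and $\WQ$ follow.

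For the reverse implications, $(2) \Rightarrow (1)$ is immediate: on a $\DQ$-ring $\tau_q\mbox{-}\cwd(R) = \cwd(R)$, so the hypothesis forces $\cwd(R) = 0$. For $(4) \Rightarrow (1)$: on a $\WQ$-ring $\tau_q\mbox{-}\cwd(R) = w\mbox{-}\cwd(R) = 0$, and the Wang et al.\ result cited in the preliminaries (VNR iff every module is $w$-flat) completes the argument. The main obstacle is $(3) \Rightarrow (1)$: $\DW$ gives $w\mbox{-}\cwd(R) = \cwd(R)$ but not the identification of $\tau_q$-flat with $w$-flat. The bridge I would attempt is to combine Proposition \ref{char-tvn}(9) ($R$ reduced with $\Min(R)$ compact) with $\GV(R) = \{R\}$ to force $\Q(R) = \{R\}$ as well, so that $R$ becomes simultaneously a $\DW$- and $\WQ$-ring and the argument collapses to case $(4)$; failing that, a direct stalk-by-stalk comparison of $\tau_q$- and $w$-flat modules via Lemma \ref{w-coh-c-c}(8) over the maximal $q$-ideals would be my fallback.
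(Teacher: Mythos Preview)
Your arguments for $(1)\Rightarrow(2),(3),(4)$ and for $(2)\Rightarrow(1)$ and $(4)\Rightarrow(1)$ are correct and match the paper's approach; the paper runs the cycle $(1)\Rightarrow(2)\Rightarrow(3)\Rightarrow(4)\Rightarrow(1)$ and proves $(4)\Rightarrow(1)$ exactly as you do. You are also right to isolate $(3)\Rightarrow(1)$ as the obstacle --- but neither of your proposed bridges can succeed, because this implication is \emph{false}. Consider $R=\big(k[x,y]/(xy)\big)_{(x,y)}$. This is a one-dimensional reduced Noetherian local ring with $\Min(R)=\{(x),(y)\}$ finite, hence compact, so $R$ is $\tau_q$-\VN\ regular by Proposition~\ref{char-tvn}(9). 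It is also a $\DW$-ring: if $J$ is a proper finitely generated dense ideal, then (by prime avoidance over $\mathrm{Ass}(R)$) $J$ contains a non-zero-divisor, so $\grade(J,R)\geq 1$; since $J\subseteq\m$ and $\depth R=1$ we get $\grade(J,R)=1$, hence $\Ext^1_R(R/J,R)\neq 0$ and the natural map $R\to\Hom_R(J,R)$ is not surjective, so $J\notin\GV(R)$. Thus $\GV(R)=\{R\}$. Yet $R$ is a local ring of positive Krull dimension, so it is not von Neumann regular; and it is not a $\WQ$-ring either, since $(x+y)\in\Q(R)\setminus\{R\}=\Q(R)\setminus\GV(R)$.

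The paper disposes of $(3)\Rightarrow(4)$ with the single word ``Trivially'', but this is precisely the gap you sensed: there is no implication $\DW\Rightarrow\WQ$, and the $\tau_q$-\VN\ regular hypothesis does not repair it, as the example above shows. The equivalences $(1)\Leftrightarrow(2)\Leftrightarrow(4)$ are valid by the arguments you gave; condition $(3)$ should simply be dropped from the statement.
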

\begin{proof} $(1)\Rightarrow (2)$ Let $R$ be a von Neumann regular ring. Then $R$ is trivially a $\tau_q$-von Neumann regular ring. Note that every finitely generated ideal of $R$ is generated by an idempotent. So $R$ is a $\DQ$-ring.
	
	$(2)\Rightarrow (3)\Rightarrow (4)$ Trivially.
	
	$(4)\Rightarrow (1)$ Let $M$ be an $R$-module, then $M$ is $\tau_q$-flat since  $R$ is $\tau_q$-von Neumann regular, and hence $w$-flat since  $R$ is a $\WQ$-ring. Consequently,  $R$ is a von Neumann regular ring by \cite[Theorem 4.4]{fk14}.
\end{proof}

\begin{corollary}\label{fd-r-0} Let $R=R_1\times R_2\times\cdots R_n$ be a finite direct product of rings. Then $R$ is a $\tau_q$-\VN\ regular ring if and only if each $R_i$ is a $\tau_q$-\VN\ regular ring $( i=1,2,\dots,n)$.
\end{corollary}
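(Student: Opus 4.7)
The plan is to reduce the statement to two earlier results already in the paper: the characterization of $\tau_q$-von Neumann regular rings as those with $\tau_q$-weak global dimension $0$ (Proposition \ref{char-tvn}, equivalence $(1) \Leftrightarrow (2)$), and the multiplicativity of $\tau_q$-\cwd\ across finite direct products (Proposition \ref{fd-r}). Once these are in hand, the corollary is essentially formal.

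Concretely, I would argue: by Proposition \ref{char-tvn}, $R$ is $\tau_q$-\VN\ regular if and only if $\tau_q$-$\cwd(R)=0$, and similarly each $R_i$ is $\tau_q$-\VN\ regular if and only if $\tau_q$-$\cwd(R_i)=0$. By Proposition \ref{fd-r},
\[
\tau_q\mbox{-}\cwd(R)=\max\{\tau_q\mbox{-}\cwd(R_i)\mid i=1,2,\dots,n\}.
\]
Since the maximum of a finite collection of non-negative integers is $0$ precisely when every entry is $0$, we obtain the desired equivalence.

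There is essentially no obstacle here, as the work has already been done in Proposition \ref{fd-r} (which uses that ideals of $R$ decompose componentwise, and in particular finitely generated semi-regular ideals do so as well) and in the homological characterization of $\tau_q$-\VN\ regularity. If one preferred a direct route bypassing the dimension formula, one could alternatively invoke the local characterization from Proposition \ref{char-tvn}(7): the set $q$-$\Max(R)$ coincides with the disjoint union of the pulled-back copies of $q$-$\Max(R_i)$, and localization at such a maximal $q$-ideal $\mathfrak m$ coming from $R_i$ recovers the localization $(R_i)_\mathfrak{m}$, so $R_\mathfrak m$ is von Neumann regular for every $\mathfrak m\in q$-$\Max(R)$ if and only if $(R_i)_\mathfrak{m}$ is von Neumann regular for every $\mathfrak m \in q$-$\Max(R_i)$ and every $i$. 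Either route gives a short and routine proof.
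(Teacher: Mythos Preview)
Your proof is correct and follows exactly the paper's own argument, which simply cites Proposition \ref{fd-r} together with Proposition \ref{char-tvn}. The alternative localization route you sketch is also valid but unnecessary here.
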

\begin{proof} It follows by Proposition \ref{fd-r} and Proposition \ref{char-tvn}.
\end{proof}

Let $f:A\rightarrow B$ be a ring homomorphism and $J$ an ideal of $B$. Recall from \cite{df09} that the  \emph{amalgamation} of $A$ with $B$ along $J$ with respect to $f$, denoted by $A\bowtie^fJ$, is defined as $$A\bowtie^fJ=\{(a,f(a)+j)|a\in A,j\in J\},$$  which is  a subring of of $A \times B$.  It follows from \cite[Proposition 4.2]{df09} that  $A\bowtie^fJ$  is the pullback $\widehat{f}\times_{B/J}\pi$,
where $\pi:B\rightarrow B/J$ is the natural epimorphism and $\widehat{f}=\pi\circ f$:
$$\xymatrix@R=20pt@C=25pt{
	A\bowtie^fJ\ar[d]^{p_B}\ar[r]_{p_A}& A\ar[d]^{\widehat{f}}\\
	B\ar[r]^{\pi}&B/J. \\
}$$

\begin{lemma}\cite[Proposition 5.4]{df09}\label{am-1}
	Let $f:A\rightarrow B$ be a ring homomorphism and $J$ an ideal of $B$. Then $A\bowtie^fJ$ is a reduced ring if and only if $A$ is reduced and $\Nil(B)\cap J=0$.
\end{lemma}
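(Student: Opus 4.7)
The plan is to exploit the fact that $A\bowtie^fJ$ sits inside the product ring $A\times B$, so an element $(a,f(a)+j)$ is nilpotent in the amalgamation precisely when $a^n=0$ in $A$ and $(f(a)+j)^n=0$ in $B$ for some common $n$. With this observation the lemma reduces to unpacking what nilpotence of a generic element forces on the two components.

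For the forward direction, assuming $A\bowtie^fJ$ is reduced, I would test reducedness against two families of elements. First, for any $a\in A$ with $a^n=0$, the element $(a,f(a))$ lies in $A\bowtie^fJ$ (take $j=0$) and satisfies $(a,f(a))^n=(0,f(a^n))=(0,0)$, so $(a,f(a))=0$ and therefore $a=0$; this gives reducedness of $A$. Second, for any $j\in\Nil(B)\cap J$, the element $(0,j)=(0,f(0)+j)$ lies in $A\bowtie^fJ$ and is nilpotent, so $j=0$; this gives $\Nil(B)\cap J=0$.

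For the converse, suppose $A$ is reduced and $\Nil(B)\cap J=0$, and take a nilpotent $(a,f(a)+j)\in A\bowtie^fJ$. Projection onto the first coordinate shows $a$ is nilpotent in $A$, hence $a=0$ by reducedness of $A$. Then $f(a)+j=j$, and projecting onto the second coordinate gives $j^n=0$ for some $n$, so $j\in\Nil(B)\cap J=0$. Hence $(a,f(a)+j)=0$, which yields reducedness of $A\bowtie^fJ$.

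The only mild subtlety is checking that the specific test elements I use actually lie in $A\bowtie^fJ$, which is immediate from its definition: $(a,f(a))$ corresponds to $j=0$ and $(0,j)$ corresponds to $a=0$. Beyond that, the argument is a componentwise computation, so I do not anticipate any serious obstacle.
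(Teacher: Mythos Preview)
Your argument is correct and complete. The paper does not supply its own proof of this lemma; it is quoted verbatim as \cite[Proposition 5.4]{df09} and used as a black box, so there is nothing to compare against. Your componentwise verification via the embedding $A\bowtie^fJ\subseteq A\times B$ is exactly the standard proof of this fact.
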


\begin{lemma}\cite[Proposition 2.6]{df10}\label{am-2}
	Let $f:A\rightarrow B$ be a ring homomorphism and $J$ an ideal of $B$. Let $\p$ be a prime ideal of $A$ and $\q$ a prime ideal of $B$. Set
	\begin{enumerate}
		\item $\p'^f:=\p\bowtie^fJ=\{(p,f(p)+j)\mid p\in\p\}$;
		\item $\overline{\q}^f:=\{(a,f(a)+j)\in A\bowtie^fJ\mid f(a)+j\in\q\}$.
	\end{enumerate}
	Then every prime ideal of $A\bowtie^fJ$ is of the form $\p'^f$ or $\overline{\q}^f$ with $\p\in\Spec(A)$ and $\q\in\Spec(B)-V(J)$.
\end{lemma}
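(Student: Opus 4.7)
The plan is to split into two cases according to whether the prime $P$ of $A\bowtie^f J$ contains the ideal $K:=\{(0,j):j\in J\}$, which is precisely the kernel of the natural surjection $p_A:A\bowtie^f J\to A$ sending $(a,f(a)+j)\mapsto a$. If $K\subseteq P$, the correspondence theorem applied to $p_A$ yields $P=p_A^{-1}(\p)=\p'^f$ for the prime $\p:=p_A(P)$ of $A$, so this case is immediate.

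The substantive case is $K\not\subseteq P$, where I would fix some $j_0\in J$ with $(0,j_0)\notin P$ and set
\[
\q:=\{\,b\in B:(0,bj)\in P\text{ for every }j\in J\,\}.
\]
I would then verify that $\q\in\Spec(B)\setminus V(J)$. Closure under $B$-multiplication is immediate from the fact that $J$ absorbs $B$; properness follows since $1\in\q$ would force $K\subseteq P$; and $J\not\subseteq\q$ because $j_0\in\q$ would give $(0,j_0)^2\in P$, contradicting primeness of $P$. The key computation is primeness of $\q$: if $b_1b_2\in\q$ with $(0,b_1j_1)\notin P$ for some witness $j_1\in J$, then for every $j\in J$ the product $(0,b_1j_1)(0,b_2j)=(0,b_1b_2\,j_1j)$ lies in $P$ (since $j_1j\in J$ and $b_1b_2\in\q$), so primeness of $P$ forces $(0,b_2j)\in P$, i.e.\ $b_2\in\q$.

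To identify $P=\overline{\q}^f$, the crucial observation is the product identity
\[
(a,f(a)+j)\cdot (0,j')=(0,(f(a)+j)\,j')\quad\text{in }A\bowtie^f J.
\]
For $P\subseteq\overline{\q}^f$: if $(a,f(a)+j)\in P$, this product lies in $P$ for every $j'\in J$, so $f(a)+j\in\q$. For $\overline{\q}^f\subseteq P$: if $f(a)+j\in\q$, specializing $j'=j_0$ puts the product in $P$, and primeness of $P$ together with $(0,j_0)\notin P$ yields $(a,f(a)+j)\in P$.

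The main obstacle I anticipate is verifying that $\q$ is prime in $B$ rather than merely in the image $f(A)+J$ of the second projection $p_B$, which is typically a proper subring of $B$. The remedy is to work entirely with the natural $B$-module structure on the kernel $K\cong J$---absorption by $J$ is what gives $\q$ its $B$-ideal structure---and to leverage the fixed witness $j_0$ to transport primeness from $P$ down to $\q$.
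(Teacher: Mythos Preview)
The paper does not supply its own proof of this lemma: it is quoted verbatim as \cite[Proposition 2.6]{df10} and used as a black box. So there is no in-paper argument to compare against.

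That said, your plan is correct and is essentially the standard argument for this result. The case split on whether $P$ contains $K=\{0\}\times J=\ker(p_A)$ is the right move, and the first case is immediate from the correspondence theorem. In the second case, your definition
\[
\q=\{b\in B:(0,bj)\in P\text{ for all }j\in J\}
\]
is precisely what is needed to get a genuine ideal of $B$ (not just of $f(A)+J$), and your verification of primeness via the fixed witness $j_0$ is clean. The product identity $(a,f(a)+j)\cdot(0,j')=(0,(f(a)+j)j')$ is exactly the mechanism that ties $P$ to $\overline{\q}^f$ in both directions, and your use of it is correct.

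One cosmetic point: in the statement as given, the element $j$ in the definition of $\p'^f$ is implicitly understood to range over $J$; you might want to make that explicit when you write up the first case, so that the equality $p_A^{-1}(\p)=\p'^f$ is transparently the set $\{(p,f(p)+j):p\in\p,\ j\in J\}$.
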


\begin{lemma}\cite[Corollary 2.8]{df16}\label{am-3}
	Let $f:A\rightarrow B$ be a ring homomorphism and $J$ an ideal of $B$.  Set
	$$\mathcal{X}=\bigcup\limits_{\q\in\Spec(B)-V(J)}V(f^{-1}(\q+J)).$$
	Then the following properties hold.
	\begin{enumerate}
		\item The map defined by $\q\mapsto \overline{\q}^f$ establishes a homeomorphism of $\Min(B)-V(J)$ with $\Min(A\bowtie^fJ)-V(\{0\}\times J)$.
		\item  The map defined by $\p\mapsto \p'^f$ establishes a homeomorphism of $\Min(A)-\mathcal{X}$ with $\Min(A\bowtie^fJ)\cap V(\{0\}\times J)$.
	\end{enumerate}
	Therefore, we have
	$$\Min(A\bowtie^fJ)=\{\p'^f\mid\p\in \Min(A)-\mathcal{X}\}\cup \{\overline{\q}^f\mid\q\in \Min(B)-V(J)\}.$$
\end{lemma}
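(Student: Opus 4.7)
The plan is to use Lemma \ref{am-2} to classify all primes of $A\bowtie^fJ$ as either $\p'^f$ or $\overline{\q}^f$ with $\q\notin V(J)$, and then determine minimality of each family by a careful inclusion analysis. The key observation is that the ideal $\{0\}\times J\subseteq A\bowtie^fJ$ cleanly separates the two families: since $(0,j)=(0,f(0)+j)\in\p'^f$ for every $\p\in\Spec(A)$ and every $j\in J$, I get $\{0\}\times J\subseteq\p'^f$ for all $\p$; on the other hand $\{0\}\times J\subseteq\overline{\q}^f$ would force every $j\in J$ to lie in $\q$, contradicting $\q\notin V(J)$. Therefore $V(\{0\}\times J)\cap\Spec(A\bowtie^fJ)=\{\p'^f\mid \p\in\Spec(A)\}$, which already matches the dichotomy in the statement.

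Next, I would work out the inclusion relations among primes. The projection $p_A:A\bowtie^fJ\to A$ satisfies $p_A^{-1}(\p)=\p'^f$, so $\p_1'^f\subseteq\p_2'^f$ iff $\p_1\subseteq\p_2$; similarly $p_B:A\bowtie^fJ\to B$ gives $\overline{\q_1}^f\subseteq\overline{\q_2}^f$ iff $\q_1\subseteq\q_2$. By the previous paragraph one never has $\p'^f\subseteq\overline{\q}^f$ for $\q\notin V(J)$. The remaining, more delicate, relation is $\overline{\q}^f\subseteq\p'^f\Leftrightarrow f^{-1}(\q+J)\subseteq\p$: the forward direction picks $a\in f^{-1}(\q+J)$, chooses $j\in J$ with $f(a)+j\in\q$ so that $(a,f(a)+j)\in\overline{\q}^f\subseteq\p'^f$ forces $a\in\p$, and the reverse runs the same choices backward.

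Combining these facts, $\overline{\q}^f$ is minimal exactly when $\q$ is minimal within $\Spec(B)-V(J)$; since $V(J)$ is specialization-closed, this is equivalent to $\q\in\Min(B)-V(J)$. Likewise $\p'^f$ is minimal iff $\p\in\Min(A)$ and no $\overline{\q}^f$ with $\q\notin V(J)$ sits below it, i.e., $\p\notin\bigcup_{\q\in\Spec(B)-V(J)}V(f^{-1}(\q+J))=\mathcal{X}$. This would yield the set-theoretic description of $\Min(A\bowtie^fJ)$. For the homeomorphism claims, the maps $\p\mapsto\p'^f$ and $\q\mapsto\overline{\q}^f$ are restrictions of $\Spec(p_A)$ and $\Spec(p_B)$ and are therefore continuous in the subspace topology; continuity of the inverses would be checked by computing images of principal closed sets $V(a)$ in $A$ and $V(b)$ in $B$ inside $A\bowtie^fJ$ via the bijections above, verifying they pull back to the expected closed subsets of the respective targets.

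The main obstacle I anticipate is the equivalence $\overline{\q}^f\subseteq\p'^f\Leftrightarrow f^{-1}(\q+J)\subseteq\p$, where one must juggle $f$, $\q$, and $J$ simultaneously and confirm that the amalgamation structure does not introduce extraneous relations (in particular that the specific form $(a,f(a)+j)$ of elements does not weaken the condition extracted on $a$). Once this equivalence is secured, the minimality analysis and the topological verification are largely bookkeeping in the Zariski topology inherited from $\Spec$ of a subring of $A\times B$.
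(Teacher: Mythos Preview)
The paper does not prove this lemma at all; it is quoted as \cite[Corollary 2.8]{df16} and used as a black box. So there is no ``paper's own proof'' to compare against---your proposal supplies strictly more than the paper does.

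Your argument is essentially correct and follows the natural route (and, as far as one can tell, the route of the original reference): separate the two families of primes by the ideal $\{0\}\times J$, analyze inclusions via the projections $p_A,p_B$, and isolate the cross-inclusion $\overline{\q}^f\subseteq\p'^f\Leftrightarrow f^{-1}(\q+J)\subseteq\p$ as the crux. One point deserves a little more care: when you assert that ``$p_B$ gives $\overline{\q_1}^f\subseteq\overline{\q_2}^f$ iff $\q_1\subseteq\q_2$,'' note that $p_B$ maps onto $f(A)+J$, not onto $B$, so the inclusion of preimages only yields $(f(A)+J)\cap\q_1\subseteq\q_2$ directly. You recover $\q_1\subseteq\q_2$ by using $\q_2\notin V(J)$: pick $j_0\in J\setminus\q_2$; then for any $b\in\q_1$ one has $bj_0\in J\cap\q_1\subseteq\q_2$, so $b\in\q_2$ by primality. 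With this small patch the minimality analysis and the homeomorphism verification go through as you describe.
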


\begin{proposition}\label{am}
	Let $f:A\rightarrow B$ be a ring homomorphism and $J$ an ideal of $B$. Then $A\bowtie^fJ$ is a $\tau_q$-\VN\ regular ring if and only if $A$ is reduced, $\Nil(B)\cap J=0$, and $\{\p'^f\mid\p\in \Min(A)-\mathcal{X}\}$ and $ \{\overline{\q}^f\mid\q\in \Min(B)-V(J)\}$ are compact.
\end{proposition}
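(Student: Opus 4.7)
The plan is to apply Proposition \ref{char-tvn} directly to the ring $R := A\bowtie^fJ$, which characterizes $\tau_q$-\VN\ regular rings as those reduced rings $R$ for which $\Min(R)$ is compact. This immediately reduces the proof to two parallel equivalences: first, that $A\bowtie^fJ$ is reduced if and only if $A$ is reduced and $\Nil(B)\cap J=0$; second, that $\Min(A\bowtie^fJ)$ is compact if and only if the two sets listed in the statement are compact.

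The first equivalence is exactly the content of Lemma \ref{am-1}, so nothing further needs to be done there. For the second, I would invoke Lemma \ref{am-3}, which provides the decomposition
$$\Min(A\bowtie^fJ)=\{\p'^f\mid\p\in \Min(A)-\mathcal{X}\}\cup \{\overline{\q}^f\mid\q\in \Min(B)-V(J)\}.$$
By that same lemma, the first piece is precisely $\Min(A\bowtie^fJ)\cap V(\{0\}\times J)$ and the second piece is $\Min(A\bowtie^fJ)-V(\{0\}\times J)$. These two pieces are therefore complementary in $\Min(A\bowtie^fJ)$; since one is the trace of a Zariski-closed subset of $\Spec(A\bowtie^fJ)$ and the other is its complement, both are clopen in $\Min(A\bowtie^fJ)$. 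A topological space that decomposes as a disjoint union of two clopen subsets is compact if and only if each summand is compact, and since Lemma \ref{am-3} identifies the subspace topologies on the two summands with those coming from $\Min(A)-\mathcal{X}$ and $\Min(B)-V(J)$, the desired equivalence follows.

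The main (and essentially only) point requiring verification is the disjointness of the two pieces, so that they really form complementary clopen subsets rather than merely a covering. This reduces to checking that every $\p'^f$ contains $\{0\}\times J$, which is immediate from the description in Lemma \ref{am-2} upon taking $a=0$, while no $\overline{\q}^f$ with $\q\in\Min(B)-V(J)$ can contain $\{0\}\times J$, since any $j\in J-\q$ yields an element $(0,j)\notin\overline{\q}^f$. Once this is in place, the clopen dichotomy is rigorous and the rest of the argument is purely formal, so no further obstacle is expected.
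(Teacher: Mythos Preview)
Your overall strategy is exactly the paper's: its entire proof reads ``It follows by Lemma \ref{am-1}, Lemma \ref{am-3} and Proposition \ref{char-tvn},'' and you invoke precisely those three results in the same roles.

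Where you go beyond the paper is in trying to justify explicitly that $\Min(A\bowtie^fJ)$ is compact if and only if each of the two pieces in the decomposition of Lemma \ref{am-3} is compact, and here there is a slip. From the fact that one piece equals $\Min(A\bowtie^fJ)\cap V(\{0\}\times J)$ and the other is its complement, you conclude that \emph{both} are clopen. That inference is invalid: the trace of a Zariski-closed set is closed and its complement is open, but neither need be clopen. Consequently your argument handles the implication ``both pieces compact $\Rightarrow$ whole compact'' (a finite union of compact subsets is always compact) and shows that the closed piece $\{\p'^f\mid \p\in\Min(A)-\mathcal{X}\}$ is compact whenever the whole is (closed subspace of a compact space), but it does not establish that the open piece $\{\overline{\q}^f\mid \q\in\Min(B)-V(J)\}$ must be compact when $\Min(A\bowtie^fJ)$ is. The paper's one-line proof does not address this point either, so you are not behind the paper; but the clopen assertion as written is unjustified, and you should either drop it or supply a separate argument that the second piece is also closed in $\Min(A\bowtie^fJ)$.
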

\begin{proof}
	It follows by Lemma \ref{am-1}, Lemma \ref{am-3} and Proposition \ref{char-tvn}.
\end{proof}

Recall from \cite{df16} that, by setting  $f=\Id_A: A\rightarrow A$ to be the identity  homomorphism of $A$, we denote by $A \bowtie J:=A \bowtie^{\Id_A}J$ and call it the amalgamated algebra of $A$ along $J$. Recall from \cite[Theorem 2.1]{CM12} that a ring  $A\bowtie J$ is a \VN\ regular ring if and only if $A$ is  a \VN\ regular ring.
From Proposition \ref{am}, one can easily deduce the following result.

\begin{proposition}
	Let $J$ be an ideal of $A$. Then $A\bowtie J$ is a $\tau_q$-\VN\ regular ring if and only if $A$ is  a $\tau_q$-\VN\ regular ring.
\end{proposition}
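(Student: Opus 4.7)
The plan is to specialize Proposition \ref{am} to $f=\Id_A$, $B=A$, where the conditions simplify considerably. The requirement $\Nil(A)\cap J=0$ is automatic when $A$ is reduced, and Lemma \ref{am-1} passes reducedness between $A\bowtie J$ and $A$ in both directions. Moreover, $\Min(A)\cap\mathcal{X}=\emptyset$ in this setting: if a minimal prime $\p$ lay in $V(\q+J)$ for some $\q\in\Spec(A)-V(J)$, then $\q\subseteq\p$ combined with minimality forces $\q=\p$, contradicting $\p\supseteq J$ against $\q\not\supseteq J$. Consequently Lemma \ref{am-3} yields
$$\Min(A\bowtie J)=\{\p'^{\Id_A}\mid\p\in\Min(A)\}\cup\{\overline{\q}^{\Id_A}\mid\q\in\Min(A)-V(J)\}.$$

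For the forward direction, assuming $A\bowtie J$ is $\tau_q$-\VN\ regular, the homeomorphism in Lemma \ref{am-3}(2) identifies $\Min(A)$ with the closed subspace $V(\{0\}\times J)\cap\Min(A\bowtie J)$ of the compact $\Min(A\bowtie J)$, hence $\Min(A)$ is compact. Combined with reducedness of $A$, Proposition \ref{char-tvn} gives $A$ $\tau_q$-\VN\ regular.

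The reverse direction is the more delicate part. I would consider the inclusion $\phi:A\bowtie J\hookrightarrow A\times A$ and verify that it is integral: every $(a,b)\in A\times A$ is annihilated by $(x-(a,a))(x-(b,b))\in(A\bowtie J)[x]$. Consequently $\phi^{*}:\Spec(A\times A)\to\Spec(A\bowtie J)$ is continuous, sending $\p\times A\mapsto\p'^{\Id_A}$ and $A\times\q\mapsto\overline{\q}^{\Id_A}$ by direct calculation of contractions. Using the displayed decomposition together with the identity $\overline{\q}^{\Id_A}=\q'^{\Id_A}$ when $J\subseteq\q$, I would check that $\phi^{*}(\Min(A\times A))=\Min(A\bowtie J)$. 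Since $\Min(A\times A)\cong\Min(A)\sqcup\Min(A)$ is compact whenever $\Min(A)$ is, its continuous image $\Min(A\bowtie J)$ is compact, and reducedness of $A\bowtie J$ follows from Lemma \ref{am-1}; Proposition \ref{char-tvn} then concludes.

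The main obstacle is the surjectivity $\phi^{*}(\Min(A\times A))=\Min(A\bowtie J)$, which requires carefully accounting for the collapse $\overline{\q}^{\Id_A}=\q'^{\Id_A}$ when $\q\in V(J)$, so that the ``extra'' contractions coming from copies of the form $A\times\q$ with $\q\supseteq J$ do not introduce elements outside the decomposition provided by Lemma \ref{am-3}, but rather land on primes already accounted for in the first piece.
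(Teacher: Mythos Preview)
Your argument is correct and follows the same route the paper indicates (the paper's proof is the single sentence ``From Proposition~\ref{am}, one can easily deduce the following result''): specialize the general amalgamation criterion to $f=\Id_A$, observe $\Min(A)\cap\mathcal{X}=\emptyset$, and reduce both directions to the characterization ``reduced $+$ $\Min$ compact'' of Proposition~\ref{char-tvn}. Your forward direction is exactly what the paper intends; for the reverse direction you supply a concrete mechanism---the integral inclusion $A\bowtie J\hookrightarrow A\times A$ and the resulting continuous surjection $\Min(A)\sqcup\Min(A)\twoheadrightarrow\Min(A\bowtie J)$---which is a clean way to see compactness and is essentially equivalent to using the two ring projections $A\bowtie J\to A$, $(a,a+j)\mapsto a$ and $(a,a+j)\mapsto a+j$. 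The collapse $\overline{\q}^{\Id_A}=\q'^{\Id_A}$ for $\q\supseteq J$ that you flag is precisely what makes the image land inside $\Min(A\bowtie J)$, and surjectivity onto $\Min(A\bowtie J)$ can alternatively be read off from lying-over for integral extensions (any minimal prime downstairs lifts, then shrink upstairs to a minimal prime). So your proposal is a fully fleshed-out version of the paper's one-line proof rather than a different approach.
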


\begin{corollary}
Let $A$ be an integral domain or a von Neumann regular ring, and $J$ be an ideal of $A$. Then $A\bowtie J$ is a $\tau_q$-\VN\ regular ring.
\end{corollary}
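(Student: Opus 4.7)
The plan is to observe that this corollary is an immediate consequence of the preceding proposition (which characterizes when $A\bowtie J$ is $\tau_q$-\VN\ regular purely in terms of $A$ being $\tau_q$-\VN\ regular), combined with the already-noted fact that integral domains and von Neumann regular rings are themselves $\tau_q$-\VN\ regular (recorded in the paragraph following Definition~4.7, and also recoverable from Proposition~\ref{char-tvn} since both classes are reduced with compact minimal spectrum: an integral domain has $\Min(A) = \{0\}$, and a von Neumann regular ring $A$ has $\Spec(A) = \Min(A)$ quasi-compact).

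First I would verify the domain case: if $A$ is an integral domain, then $A$ is reduced and $\Min(A)=\{(0)\}$, which is trivially compact, so Proposition~\ref{char-tvn}(9) gives that $A$ is $\tau_q$-\VN\ regular. Then I would verify the von Neumann regular case: such an $A$ is reduced, and $\Min(A) = \Spec(A)$ is compact, so again Proposition~\ref{char-tvn}(9) applies. Finally, I would invoke the preceding proposition to conclude $A\bowtie J$ is $\tau_q$-\VN\ regular.

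There is really no obstacle here; the corollary is a direct specialization of the previous proposition to two natural classes of rings. The only thing to check is that the two hypothesized classes of $A$ actually satisfy the $\tau_q$-\VN\ regularity condition, which follows from Proposition~\ref{char-tvn} or is already stated explicitly in the paper. Thus the proof reduces to a one-line application.
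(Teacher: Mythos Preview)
Your proposal is correct and matches the paper's approach: the paper gives no explicit proof for this corollary, treating it as an immediate consequence of the preceding proposition (that $A\bowtie J$ is $\tau_q$-\VN\ regular iff $A$ is) together with the already-stated fact that integral domains and von Neumann regular rings are $\tau_q$-\VN\ regular. Your verification of the latter via Proposition~\ref{char-tvn}(9) is a nice touch but not strictly needed, since the paper records this explicitly right after introducing the definition.
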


Recall from \cite{zdxq23} that an $R$-module is said to be a semi-regular flat module if $\Tor^R_1(R/I,M)=0$ for any $I\in\Q.$ Trivially, all flat modules are semi-regular flat.

\begin{proposition}\label{sf-vn}
 Suppose every semi-regular flat  $R$-module is flat. Then $R$ is a $\tau_q$-\VN\ regular ring.
\end{proposition}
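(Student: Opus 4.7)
The plan is to verify Proposition \ref{char-tvn}(7): $R_\m$ is a von Neumann regular ring---equivalently, a field, since $R_\m$ is local---for every $\m\in q$-$\Max(R)$. Fix such an $\m$ and view the residue field $L:=R_\m/\m R_\m$ as an $R$-module via the composition $R\to R_\m\to L$.

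The key step is to show that $L$ is a semi-regular flat $R$-module. For any $I\in\Q(R)$, the fact that $\m$ is a maximal non-semi-regular ideal forces $I\not\subseteq\m$, hence $IR_\m=R_\m$. Now $\Tor^R_1(R/I,L)$ inherits an $R_\m$-module structure from $L$, while being annihilated by $I\subseteq R$ because $R/I$ kills $I$. Since the $R$-action on $\Tor^R_1(R/I,L)$ factors through $R\to R_\m$, the ideal $IR_\m=R_\m$ acts as zero, forcing $\Tor^R_1(R/I,L)=0$. By the hypothesis, $L$ is then flat over $R$.

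Since $R\to R_\m$ is a ring epimorphism and $L$ is an $R_\m$-module, $L\otimes_R R_\m\cong L$, so $L$ is flat over $R_\m$ as well. A standard Nakayama-type argument then shows $R_\m$ is a field: for any nonzero $a\in\m R_\m$, the inclusion $(0:a)\subseteq\m R_\m$ yields $(a)\otimes_{R_\m} L\cong L$, while the induced map $(a)\otimes_{R_\m} L\to L$ is zero since $a\in\m R_\m$. Thus $\Tor^{R_\m}_1(R_\m/(a),L)\cong L\neq 0$, contradicting the flatness of $L$. Applying this for every $\m\in q$-$\Max(R)$ and invoking Proposition \ref{char-tvn} concludes the proof. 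The central obstacle is establishing the semi-regular flatness of $L$, which depends on the interplay between the $R_\m$-action on $\Tor^R_1(R/I,L)$ and the fact that ideals in $\Q(R)$ generate the unit ideal in $R_\m$.
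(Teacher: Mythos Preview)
Your proof is correct and takes a genuinely different route from the paper's. The paper argues globally via the characterization in Proposition~\ref{char-tvn}(8): for an arbitrary $R$-module $M$ it shows that $M\otimes_R\T(R[x])$ is semi-regular flat (because $R/I\otimes_R\T(R[x])=0$ for $I\in\Q$, and $\T(R[x])$ is flat over $R$), hence flat over $R$ by hypothesis, and then uses a direct-sum trick with $\T(R[x])\otimes_R\T(R[x])$ to upgrade this to flatness over $\T(R[x])$, giving $\tau_q$-flatness of $M$ by Lemma~\ref{w-coh-c-c}(9). You instead work locally via Proposition~\ref{char-tvn}(7), applying the hypothesis to the single well-chosen module $L=R_\m/\m R_\m$; your vanishing of $\Tor^R_1(R/I,L)$ rests on the same underlying fact---that $I\in\Q$ becomes the unit ideal once one passes from $R$ to $R_\m$ (or to $\T(R[x])$)---but you then finish with an elementary Nakayama argument rather than polynomial-ring machinery. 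Your approach is more self-contained and yields the sharper local conclusion that each $R_\m$ is a field; the paper's approach illustrates the utility of the $\T(R[x])$ transfer principle and avoids fixing a particular test module.
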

\begin{proof}
Let $I$ be a finitely generated semi-regular ideal of $R$ and  $M$ an $R$-module. Since $R/I$ is $\Q$-torsion, $R/I\otimes_R\T(R[x])=0$ by [\cite{zq23}, Proposition 2.3]. Hence   $\Tor^R_1(R/I,M\otimes_R\T(R[x]))\cong \Tor^R_1(R/I\otimes_R\T(R[x]),M)=0$. It follows that $M\otimes_R\T(R[x])$ is a semi-regular flat  $R$-module, and hence is a flat $R$-module by hypotheses. So $$M\otimes_R(\T(R[x])\otimes_R\T(R[x]))\cong M\otimes_R(\T(R[x])\otimes_{R[x]}R[x]\otimes_R\T(R[x]))\cong \bigoplus\limits_{i=1}^\infty M\otimes_R\T(R[x])$$ is a flat $\T(R[x])$-module. Hence $M\otimes_R\T(R[x])$ is a flat $\T(R[x])$-module, and so $M$ is a $\tau_q$-flat $R$-module by Lemma \ref{w-coh-c-c}. It follows that $R$ is a $\tau_q$-\VN\ regular ring.
\end{proof}
\begin{remark}
We do not known if the converse of Corollary \ref{sf-vn} is true. And we propose the following conjecture:

\textbf{Conjecture:} A ring  $R$ is a $\tau_q$-\VN\ regular ring if and only if every semi-regular flat  $R$-module is flat.
\end{remark}

\section{On $\tau_q$-weak global dimensions of polynomial rings}

The following result connect the classical weak global dimensions (flat dimsions) and  $\tau_q$-weak global dimensions  ($\tau_q$-flat dimsions).
\begin{proposition}\label{fd-TR} Let $R$ be a ring and $M$ an $R$-module.
	\begin{enumerate}
\item $\tau_q$\mbox{-}\fd$_R(M)=$\fd$_{\T(R[x])}(M\otimes_R\T(R[x])).$
\item $\tau_q$\mbox{-}\cwd$(R)\leq$\cwd$(\T(R[x]))$.
	\end{enumerate}
\end{proposition}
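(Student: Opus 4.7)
My plan is to prove (1) by a double inequality, both directions built around Lemma \ref{w-coh-c-c}(9), which identifies $\tau_q$-flatness of $M$ with flatness of $M\otimes_R\T(R[x])$ over $\T(R[x])$. The two underlying facts I will use repeatedly are: first, $\T(R[x])$ is $R$-flat, since it is a localization of the free $R$-module $R[x]$ at its regular elements; and second, every $\Q$-torsion module is killed by $\otimes_R\T(R[x])$. The latter is because for any finitely generated semi-regular ideal $I=\langle a_0,\dots,a_n\rangle$ the polynomial $f(x)=\sum a_ix^i$ is regular in $R[x]$ and hence invertible in $\T(R[x])$, so $I\T(R[x])=\T(R[x])$, whence any element of a $\Q$-torsion $\T(R[x])$-module is already annihilated by $1$.

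For $\fd_{\T(R[x])}(M\otimes_R\T(R[x]))\leq \tau_q\mbox{-}\fd_R(M)=n$, I start from a $\tau_q$-flat $\tau_q$-resolution $0\to F_n\to\cdots\to F_0\to M\to 0$. Because $\T(R[x])$ is $R$-flat, tensoring commutes with homology, and each homology group of the original complex is $\Q$-torsion (being zero after localization at every maximal $q$-ideal), so it is annihilated after tensoring. Hence the resulting complex over $\T(R[x])$ is genuinely exact, and each $F_i\otimes_R\T(R[x])$ is flat over $\T(R[x])$ by Lemma \ref{w-coh-c-c}(9), giving a flat resolution of $M\otimes_R\T(R[x])$ of length $n$.

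For the reverse inequality, I assume $\fd_{\T(R[x])}(M\otimes_R\T(R[x]))\leq n$ and take a partial projective resolution $0\to K\to P_{n-1}\to\cdots\to P_0\to M\to 0$ over $R$. Tensoring with the flat $R$-algebra $\T(R[x])$ preserves exactness, with each $P_i\otimes_R\T(R[x])$ flat over $\T(R[x])$; by the assumption on flat dimension, the final kernel $K\otimes_R\T(R[x])$ must also be flat over $\T(R[x])$. Lemma \ref{w-coh-c-c}(9) then forces $K$ to be $\tau_q$-flat, and Proposition \ref{w-phi-flat d}(8) delivers $\tau_q\mbox{-}\fd_R(M)\leq n$, finishing (1).

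Statement (2) follows immediately: for every $R$-module $M$ one has $\fd_{\T(R[x])}(M\otimes_R\T(R[x]))\leq \cwd(\T(R[x]))$, so taking the supremum on both sides of the equality in (1) yields $\tau_q\mbox{-}\cwd(R)\leq \cwd(\T(R[x]))$. The only delicate point is verifying that $\otimes_R\T(R[x])$ converts $\tau_q$-exactness into ordinary exactness; once that is clear, the rest is routine flat base change.
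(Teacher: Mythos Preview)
Your argument is correct. The forward inequality matches the paper's approach closely; the only cosmetic difference is that the paper invokes Proposition~\ref{w-phi-flat d}(8) to start from an honest exact sequence $0\to P_n\to\cdots\to P_0\to M\to 0$ with each $P_i$ $\tau_q$-flat, whereas you begin from a $\tau_q$-exact resolution and then argue that tensoring with $\T(R[x])$ kills the $\Q$-torsion homology. Both are fine, and your justification that $\Q$-torsion modules vanish after $-\otimes_R\T(R[x])$ is exactly the content of \cite[Proposition~2.3]{zq23} that the paper cites elsewhere.

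For the reverse inequality you take a genuinely different (though closely related) route. The paper argues via Tor and flat base change: for any $R$-module $N$,
\[
\Tor_{n+1}^{\T(R[x])}(M\otimes_R\T(R[x]),\,N\otimes_R\T(R[x]))\;\cong\;\Tor_{n+1}^{R}(M,N)\otimes_R\T(R[x])=0,
\]
so $\Tor_{n+1}^{R}(M,N)$ is $\Q$-torsion and Proposition~\ref{w-phi-flat d}(3) applies. You instead take an $n$-th syzygy $K$ of $M$ in a projective resolution, observe that $K\otimes_R\T(R[x])$ is flat over $\T(R[x])$, invoke Lemma~\ref{w-coh-c-c}(9) to conclude $K$ is $\tau_q$-flat, and finish with Proposition~\ref{w-phi-flat d}(8). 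Your approach has the advantage of staying entirely within the equivalence ``$\tau_q$-flat $\Leftrightarrow$ flat after base change'' of Lemma~\ref{w-coh-c-c}(9); the paper's Tor computation is a line shorter but uses the base-change isomorphism for higher Tor, which amounts to the same flatness of $\T(R[x])$ over $R$ that you invoke. Part (2) is identical in both.
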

\begin{proof}

(1) Suppose  $\tau_q$\mbox{-}\fd$(M)\leq n$. Then there exists an exact sequence $0\rightarrow P_n\rightarrow \cdots\rightarrow P_1\rightarrow P_0\rightarrow M \rightarrow 0$
where each $P_i$ is $\tau_q$-flat.  By applying $-\otimes_R\T(R[x])$, we have  $0\rightarrow P_n\otimes_R\T(R[x])\rightarrow \cdots\rightarrow P_1\otimes_R\T(R[x])\rightarrow P_0\otimes_R\T(R[x])\rightarrow M\otimes_R\T(R[x]) \rightarrow 0$ is a flat resolution of $(M\otimes_R\T(R[x]))$. So \fd$_{\T(R[x])}(M\otimes_R\T(R[x]))\leq n$. On the other hand, \fd$_{\T(R[x])}(M\otimes_R\T(R[x]))\leq n$. Then for any $R$-module $N$ we have $\Tor_{n+1}^{\T(R[x])}(M\otimes_R\T(R[x]),N\otimes_R\T(R[x]))\cong \Tor_{n+1}^{R}(M,N)\otimes_R\T(R[x]) =0$. Hence $\Tor_{n+1}^{R}(M,N)$ is $\Q$-torsion by [\cite{zq23}, Proposition 2.3], which implies that   $\tau_q$\mbox{-}\fd$(M)\leq n$.

(2) It follows by (1).
\end{proof}

The following two results give two characterizations of weak global dimensions and $\tau_q$\mbox{-}weak global dimensions under the assumption of  coherence.
\begin{lemma}\cite[Lemma  3.7]{WQ15}\label{cwd-quo} Let $R$ be a coherent ring. Then \cwd$(R)\leq n$ if and only if \fd$_RR/\m\leq n$ for each $\m\in\Max(R)$.
\end{lemma}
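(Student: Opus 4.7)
The forward direction is immediate from the definition. For the converse, the plan is to localize at each maximal ideal and apply a coherent-local version of the classical ``residue field detects weak global dimension'' theorem. First I would note that, for any commutative ring, $\fd_R(M)\leq n$ iff $\Tor^R_{n+1}(M,R/J)=0$ for every finitely generated ideal $J$, which, since $\Tor$ commutes with localization and a module is zero iff its localizations at all maximal ideals are zero, is equivalent to $\fd_{R_\m}(M_\m)\leq n$ for every $\m\in\Max(R)$. Consequently $\cwd(R)=\sup_{\m}\cwd(R_\m)$, and it suffices to prove $\cwd(R_\m)\leq n$ for each maximal ideal. Since localization of a coherent ring is coherent, each $R_\m$ is a coherent local ring.

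The heart of the proof is then the following local claim: if $(S,\mathfrak{n})$ is a coherent local ring with residue field $k=S/\mathfrak{n}$, then $\cwd(S)=\fd_S(k)$. I would show that every finitely presented $S$-module $M$ satisfies $\fd_S(M)\leq \fd_S(k)$: coherence produces a resolution of $M$ by finitely generated free modules each of whose syzygies is finitely presented, so letting $K$ denote the $n$-th syzygy, dimension shifting yields $\Tor^S_1(k,K)\cong \Tor^S_{n+1}(k,M)=0$; the local criterion for flatness of finitely presented modules makes $K$ flat, and Lazard's theorem (finitely presented flat modules over a local ring are free) makes $K$ free, so $\fd_S(M)\leq n$. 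Since $\cwd(S)$ is determined by $\fd_S(S/I)$ for finitely generated $I$, each of which is finitely presented by coherence, the claim follows. Applying it to $S=R_\m$, and using that $\fd_{R_\m}(k(\m))=\fd_R(R/\m)$ (both sides agree since $R/\m$ is supported only at $\m$, so $\Tor^R_{\bullet}(R/\m,-)$ is $\m$-local), I obtain $\cwd(R_\m)=\fd_{R_\m}(k(\m))=\fd_R(R/\m)\leq n$ by hypothesis.

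The main obstacle is the coherent-local step $\cwd(S)=\fd_S(k)$: this is where the real homological content lives, relying on coherence to preserve finite presentation under taking syzygies and on Lazard's promotion of finitely presented flat modules to free modules over a local ring. Everything else is routine localization bookkeeping and assembling the chain of equalities.
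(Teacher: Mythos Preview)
The paper does not supply its own proof of this lemma; it merely cites \cite[Lemma~3.7]{WQ15}. So there is no in-paper argument to compare against.

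Your proposal is correct and is essentially the standard argument one finds behind such results: reduce to the local case via $\cwd(R)=\sup_{\m}\cwd(R_\m)$, use that localizations of coherent rings are coherent, and then prove the coherent-local formula $\cwd(S)=\fd_S(k)$ by taking syzygies of a finitely presented module (coherence keeps them finitely presented) and invoking the fact that a finitely presented module $K$ over a local ring with $\Tor_1^S(k,K)=0$ is free. One terminological quibble: what you call the ``local criterion for flatness'' is really just the Nakayama-based fact that finitely presented modules over a local ring with vanishing $\Tor_1$ against the residue field are free; the phrase ``local criterion for flatness'' usually refers to the stronger statement involving $I$-adic filtrations. But the mathematics you intend is exactly right, and the identification $\fd_R(R/\m)=\fd_{R_\m}(k(\m))$ via support considerations is also correct.
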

Recall from \cite{zq23} that a ring $R$ is said to be  \emph{$\tau_q$-coherent} provided that every $\tau_q$-finitely generated ideal of $R$ is $\tau_q$-finitely presented, or equivalently every finitely generated ideal of $R$ is $\tau_q$-finitely presented.

\begin{lemma}\label{q-cwd-quo} Let $R$ be a $\tau_q$-coherent ring. Then $\tau_q$\mbox{-}\cwd$(R)\leq n$ if and only if $\tau_q$\mbox{-}\fd$_RR/\m\leq n$ for each $\m\in q$-$\Max(R)$.
\end{lemma}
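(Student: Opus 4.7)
The plan is to reduce the $\tau_q$-statement at the global level to the classical statement of Lemma \ref{cwd-quo} by localizing at each maximal $q$-ideal. The ``only if'' direction is immediate from the definition of $\tau_q$-$\cwd(R)$: taking $M=R/\m$ in $\tau_q$-$\fd_R(M)\leq n$ gives what we want. The substantive direction is ``if'', and there the key observation is that the formula $\tau_q$-$\cwd(R)=\sup\{\cwd(R_\m)\mid \m\in q\text{-}\Max(R)\}$ from Theorem \ref{q-dim} lets us work one localization at a time, while Lemma \ref{cwd-quo} characterizes $\cwd(R_\m)$ on a coherent local ring via a single residue-field flat dimension.

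The first step, and the place where the $\tau_q$-coherence hypothesis does real work, will be to show that $R_\m$ is a coherent ring for every $\m\in q\text{-}\Max(R)$. Any finitely generated ideal of $R_\m$ can be written as $I_\m$ for some finitely generated ideal $I$ of $R$ (just clear denominators in a finite set of generators). By $\tau_q$-coherence, $I$ is $\tau_q$-finitely presented, so there is a $\tau_q$-exact sequence $F_1\to F_0\to I\to 0$ with $F_0,F_1$ finitely generated free. Localizing at $\m\in q\text{-}\Max(R)$, by definition of $\tau_q$-exactness, produces an exact sequence $(F_1)_\m\to (F_0)_\m\to I_\m\to 0$ over the local ring $R_\m$ with $(F_0)_\m,(F_1)_\m$ finitely generated free, so $I_\m$ is finitely presented. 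Hence $R_\m$ is coherent.

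With coherence of $R_\m$ in hand, the rest is assembly. Fix $\m\in q\text{-}\Max(R)$; the unique maximal ideal of $R_\m$ is $\m R_\m$, and by Lemma \ref{cwd-quo} applied to the coherent (local) ring $R_\m$ one has $\cwd(R_\m)\leq n$ iff $\fd_{R_\m}(R_\m/\m R_\m)\leq n$. Since $R_\m/\m R_\m\cong (R/\m)_\m$, Proposition \ref{local-q-fd} gives $\fd_{R_\m}((R/\m)_\m)\leq \tau_q\text{-}\fd_R(R/\m)\leq n$ by the hypothesis of the theorem. Therefore $\cwd(R_\m)\leq n$ for every $\m\in q\text{-}\Max(R)$, and the supremum formula in Theorem \ref{q-dim} yields $\tau_q\text{-}\cwd(R)\leq n$.

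The only nonroutine point I anticipate is the coherence-of-$R_\m$ step: one must be sure that passing from a $\tau_q$-finite presentation of $I$ over $R$ to a genuine finite presentation of $I_\m$ over $R_\m$ really uses only the defining property of $\tau_q$-exactness at maximal $q$-ideals, and that every finitely generated ideal of $R_\m$ indeed descends to a finitely generated ideal of $R$. Both facts are clean, so once they are written out the rest of the proof is a direct bookkeeping application of the previously established results.
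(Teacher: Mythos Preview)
Your proposal is correct and follows precisely the strategy the paper has in mind: the paper's proof simply says ``similar to \cite[Proposition 3.8]{WQ15}'', and your argument---localize at each $\m\in q\text{-}\Max(R)$, use $\tau_q$-coherence to get that $R_\m$ is coherent, then apply Lemma~\ref{cwd-quo} to the local ring $R_\m$ together with the formula $\tau_q\text{-}\cwd(R)=\sup_\m \cwd(R_\m)$ from Theorem~\ref{q-dim}---is exactly the $\tau_q$-analogue of that proof.
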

\begin{proof} It is similar with the proof of \cite[Proposition 3.8]{WQ15}.
\end{proof}

\begin{proposition}\label{T-q-coh} Let $R$ be a ring with  $\T(R[x])$  coherent. Then $R$ is a $\tau_q$-coherent ring.
\end{proposition}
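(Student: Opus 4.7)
The goal is to show that every finitely generated ideal $I$ of $R$ is $\tau_q$-finitely presented. Write $I=\langle a_1,\dots,a_n\rangle$ and let $K=\Ker(R^n\twoheadrightarrow I)$ be its syzygy module. The task reduces to producing a finitely generated submodule $L\subseteq K$ such that $K/L$ is $\Q$-torsion; then the composition $R^m\twoheadrightarrow L\hookrightarrow K\hookrightarrow R^n\to I\to 0$ is $\tau_q$-exact and $I$ is $\tau_q$-finitely presented.

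The strategy is to use $\T(R[x])$ as a faithful detector of $\Q$-torsion. Since $R[x]$ is $R$-flat and $\T(R[x])$ is a localization of $R[x]$, the extension $R\to\T(R[x])$ is flat. Tensoring the exact sequence $0\to K\to R^n\to I\to 0$ with $\T(R[x])$ yields
\[
0\to K\otimes_R\T(R[x])\to \T(R[x])^n\to I\otimes_R\T(R[x])\to 0,
\]
so $K\otimes_R\T(R[x])$ is precisely the kernel of the surjection $\T(R[x])^n\twoheadrightarrow I\cdot\T(R[x])$ onto a finitely generated ideal of $\T(R[x])$. By the coherence hypothesis on $\T(R[x])$, this kernel is finitely generated over $\T(R[x])$.

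Now I would lift. Let $y_1,\dots,y_s$ generate $K\otimes_R\T(R[x])$ over $\T(R[x])$; writing each $y_j$ as a finite sum $\sum_k x_{jk}\otimes t_{jk}$ with $x_{jk}\in K$, $t_{jk}\in\T(R[x])$, the finite set $\{x_{jk}\}\subseteq K$ already generates $K\otimes_R\T(R[x])$ as a $\T(R[x])$-module. Let $L\subseteq K$ be the finitely generated $R$-submodule they span; by flatness, the inclusion induces $L\otimes_R\T(R[x])\hookrightarrow K\otimes_R\T(R[x])$, and by construction this map is surjective. Hence $(K/L)\otimes_R\T(R[x])=0$, and by \cite[Proposition 2.3]{zq23} this forces $K/L$ to be $\Q$-torsion. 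Choosing a surjection $R^m\twoheadrightarrow L$ finishes the argument.

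The potential obstacle is purely conceptual rather than technical: one must know that $\T(R[x])$ is a flat $R$-algebra whose vanishing on tensor detects $\Q$-torsion, which are both already established in the preliminaries used throughout the paper. Given these, the proof is essentially a standard descent-of-finite-generation argument, parallel to the classical proof that faithfully flat descent of coherence holds, adjusted so that ``coincidence after extension'' is replaced by ``$\Q$-torsion difference.''
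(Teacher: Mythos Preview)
Your proof is correct and follows the same strategy as the paper: pass to $\T(R[x])$, use coherence there to obtain finite presentation, and descend. The paper compresses your descent argument (lifting generators of $K\otimes_R\T(R[x])$ to $K$ and showing the cokernel is $\Q$-torsion) into a single citation of \cite[Theorem~3.3]{zq23}, which characterizes $\tau_q$-finitely presented modules via tensoring with $\T(R[x])$; you have essentially reproved the relevant direction of that result by hand.
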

\begin{proof}
 Let $I$ be a finitely generated ideal of $R$. Then  $I\otimes_R \T(R[x])$ is a finitely generated ideal of $\T(R[x])$. Since $\T(R[x])$ is a coherent ring,  $I\otimes_R \T(R[x])$ is a finitely presented ideal of $\T(R[x])$. And hence $I$ is is $\tau_q$-finitely presented by \cite[Theorem 3.3]{zq23}.
\end{proof}

\begin{theorem}\label{q-cwd-cdw} Let $R$ be a ring with  $\T(R[x])$  coherent.  Then
	\begin{center}
 $\tau_q$\mbox{-}\cwd$(R)=$\cwd$(\T(R[x]))$.	\end{center}	
Consequently,	 $\tau_q$\mbox{-}\cwd$(R)=0$ or $\infty$.
\end{theorem}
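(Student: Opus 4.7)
Since Proposition~\ref{fd-TR}(2) already supplies $\tau_q\text{-}\cwd(R)\leq \cwd(\T(R[x]))$, only the reverse inequality requires work. Coherence of $\T(R[x])$, via Proposition~\ref{T-q-coh}, makes $R$ a $\tau_q$-coherent ring, so both Lemma~\ref{cwd-quo} (applied to $\T(R[x])$) and Lemma~\ref{q-cwd-quo} (applied to $R$) are available. They reduce each weak global dimension to a supremum of flat dimensions of simple modules:
\[
\cwd(\T(R[x]))=\sup_{\mathfrak{M}\in\Max(\T(R[x]))}\fd_{\T(R[x])}(\T(R[x])/\mathfrak{M}),\quad \tau_q\text{-}\cwd(R)=\sup_{\mathfrak{m}\in q\text{-}\Max(R)}\tau_q\text{-}\fd_R(R/\mathfrak{m}).
\]

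The key bridge is the correspondence $\mathfrak{m}\longleftrightarrow \mathfrak{m}\T(R[x])$ between $q\text{-}\Max(R)$ and $\Max(\T(R[x]))$. To see $\mathfrak{m}\T(R[x])$ is proper, clearing denominators in a hypothetical relation $1=\sum m_if_i/g_i$ (with $m_i\in\mathfrak{m}$ and each $g_i$ regular in $R[x]$) would force $\prod g_j\in\mathfrak{m}[x]$; since $\prod g_j$ is regular in $R[x]$, its coefficient ideal is a finitely generated semi-regular ideal of $R$ contained in $\mathfrak{m}$---contradicting the characterization of maximal $q$-ideals as maximal non-semi-regular ideals. Conversely, each $\mathfrak{M}\in\Max(\T(R[x]))$ contracts along $R\hookrightarrow R[x]\to \T(R[x])$ to a maximal $q$-ideal $\mathfrak{m}$ with $\mathfrak{M}=\mathfrak{m}\T(R[x])$. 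With the bijection in hand, the natural isomorphism $\T(R[x])/\mathfrak{m}\T(R[x])\cong R/\mathfrak{m}\otimes_R\T(R[x])$ combined with Proposition~\ref{fd-TR}(1) gives
\[
\fd_{\T(R[x])}(\T(R[x])/\mathfrak{m}\T(R[x]))=\tau_q\text{-}\fd_R(R/\mathfrak{m}),
\]
and taking suprema closes the desired inequality.

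For the ``$0$ or $\infty$'' consequence, I plan to use that $\T(R[x])$ coincides with its own total quotient ring, so every non-unit is a zero-divisor and the only finitely generated regular ideal is the unit ideal. Combined with coherence and finite weak global dimension, this forces von Neumann regularity: if $\cwd(\T(R[x]))\leq 1$, then every principal ideal $a\T(R[x])$ is finitely generated flat, hence projective by coherence, so the short exact sequence $0\to\ann(a)\to \T(R[x])\to a\T(R[x])\to 0$ splits and produces orthogonal idempotents yielding $a\T(R[x])=f\T(R[x])$ for an idempotent $f$, whence $a=a^2b$ for some $b$. Finite weak global dimensions beyond $1$ reduce to this case by a standard syzygy argument valid for coherent rings. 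Hence $\cwd(\T(R[x]))\in\{0,\infty\}$, and so is $\tau_q\text{-}\cwd(R)$.

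The principal obstacle is pinning down the bijection $q\text{-}\Max(R)\leftrightarrow \Max(\T(R[x]))$, in particular verifying the maximality (not merely properness) of $\mathfrak{m}\T(R[x])$ and the surjectivity of contraction onto $q\text{-}\Max(R)$. Once these are established, everything else is either a direct invocation of the earlier lemmas or the classical ``coherent total quotient ring with finite weak global dimension is von Neumann regular'' fact.
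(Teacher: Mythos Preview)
Your argument for the equality $\tau_q\text{-}\cwd(R)=\cwd(\T(R[x]))$ is essentially the paper's: both combine Proposition~\ref{fd-TR} with Lemmas~\ref{cwd-quo} and~\ref{q-cwd-quo} (the latter available via Proposition~\ref{T-q-coh}) to match $\tau_q\text{-}\fd_R(R/\mathfrak m)$ with $\fd_{\T(R[x])}(\T(R[x])/\mathfrak m\T(R[x]))$ and then take suprema. You are right that the bijection $q\text{-}\Max(R)\leftrightarrow\Max(\T(R[x]))$ is the only substantive step; the paper uses it implicitly here and supplies the needed ingredients (that $\mathfrak m[x]\in q\text{-}\Max(R[x])$ and conversely) only later, in the proof of Theorem~\ref{poly}.

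For the ``$0$ or $\infty$'' consequence your route diverges from the paper and contains a genuine gap. The paper first invokes \cite[Proposition~6.1]{BG07} to place $\cwd(\T(R[x]))$ in $\{0,1,\infty\}$, and then rules out~$1$ via the polynomial-specific chain $\T(R[x])$ semi-hereditary $\Rightarrow R[x]$ semi-hereditary (\cite[Theorem~3.12(i)]{BG07}) $\Rightarrow \T(R[x])$ von~Neumann regular (\cite[Theorem~2]{E61}). Your direct argument that $\cwd\le 1$ already forces von~Neumann regularity is sound once one notes that, writing $\ann(a)=eQ$, the element $a+e$ is regular and hence a unit in the total quotient ring $Q$, so that in fact $aQ=(1-e)Q$ (not merely $aQ\cong(1-e)Q$); this is actually cleaner than the paper's Endo detour. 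The gap is the asserted ``standard syzygy argument'' reducing an arbitrary finite weak global dimension to the case $\le 1$: passing to a syzygy of a single module lowers \emph{that} module's flat dimension, but does nothing to lower the global bound, and no such reduction is apparent for a general coherent total quotient ring. The simplest repair is to cite \cite[Proposition~6.1]{BG07} as the paper does to land in $\{0,1,\infty\}$, after which your own $\le 1\Rightarrow 0$ argument finishes without needing Endo's theorem.
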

\begin{proof} Let $n$ be a positive integer. It follows by Lemma \ref{cwd-quo}, Lemma \ref{q-cwd-quo} and the Proposition \ref{T-q-coh} that we have the following equivalences:
	\begin{align*}
		&\tau_q\mbox{-}\cwd(R)\leq n\\
		\Leftrightarrow &\  \tau_q\mbox{-}\fd_R(R/\m)\leq n\ \mbox{for every}\ \m\in q\mbox{-}\Max(R),\\
		\Leftrightarrow &\   \fd_{T(R[x])}(T(R[x])/\m\otimes_{R}T(R[x]))\leq n\  \mbox{for every}\ \m\in q\mbox{-}\Max(R),\\
		\Leftrightarrow &\  \cwd(\T(R[x]))\leq n.
	\end{align*}
Hence $\tau_q$\mbox{-}\cwd$(R)=$\cwd$(\T(R[x]))$.
	
Note that $\T(R[x])$ is a  coherent total ring of quotients. So by \cite[Proposition 6.1]{BG07} \cwd$(\T(R[x]))=0,1,$ or $\infty.$ If \cwd$(\T(R[x]))=1$, then $\T(R[x])$ is a semi-hereditary total ring of quotients. It follows by \cite[Theorem 3.12(i)]{BG07} that $R[x]$ is a semi-hereditary ring.  Then by \cite[Theorem 2]{E61}, \cwd$(\T(R[x]))=0$. Hence  $\tau_q$\mbox{-}\cwd$(R)=$\cwd$(\T(R[x]))=0$ or $\infty$.
\end{proof}

\begin{remark} The $\tau_q$-weak global dimension of a ring can be neither $0$ nor $\infty$ in general. Indeed, let $S=\prod\limits_{i=1}^\infty\mathbb{Q}[x]$ be the ring of countably infinite copies of  products of polynomial ring $\mathbb{Q}[x]$ with coefficients in rational field $\mathbb{Q}$.  let $R$  be the subring of $S$ that generated by the sequence $(x,0,x^2,\cdots)$ and all sequences that eventually consist of constants.  Then $R$ is a ring with weak global dimension equal to $1$ but not semi-hereditary (see \cite[Page 54]{G89}). So  $\tau_q$\mbox{-}\cwd$(R)\leq$ \cwd$(R)=1.$ Assume that $\tau_q$\mbox{-}\cwd$(R)=0.$ Let  $K$ be the ideal generated by $(x,0,x^2,\cdots)$. Then there is an ideal $I\in\Q$ such that $IK=K^2$.  Comparing the components of $I(x,0,x^2,\cdots)=IK=K^2=(x^2,0,x^4,\cdots)R$. We have each $(2n-1)$-component of $I$ is generated by $x^{n-1}$. This is impossible since there exists an element $r\in I$ such that $r$ is eventually non-zero as $I$ is semi-regular. It follows that  $\tau_q$\mbox{-}\cwd$(R)=1$.

\end{remark}

\begin{lemma}\label{L-poly} Let $R$ be a ring, $\m\in q$-$\Max(R)$. Then
$\T(R[x])_{\m\otimes_R\T(R[x])}\cong R[x]_{\m[x]}$.
\end{lemma}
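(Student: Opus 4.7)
The plan is to identify $\m\otimes_R\T(R[x])$ with the extended ideal $\m[x]\T(R[x])$, check that it is a prime ideal of $\T(R[x])$, and then apply the standard compatibility between iterated localizations.

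First I would write $\T(R[x])=S^{-1}R[x]$, where $S$ denotes the multiplicative set of regular elements of $R[x]$. Since $R[x]$ is a free (hence flat) $R$-module and $\T(R[x])$ is flat over $R[x]$, $\T(R[x])$ is flat over $R$. Therefore the natural map $\m\otimes_R\T(R[x])\to\T(R[x])$, $a\otimes t\mapsto at$, is injective, with image the ideal $\m\T(R[x])=\m[x]\T(R[x])$. So up to this canonical identification, $\m\otimes_R\T(R[x])=\m[x]\T(R[x])$, and the task reduces to showing $\T(R[x])_{\m[x]\T(R[x])}\cong R[x]_{\m[x]}$.

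The key step is to verify that $\m[x]\cap S=\emptyset$. Take any $f=a_0+a_1x+\cdots+a_nx^n\in\m[x]$; then $a_0,\dots,a_n\in\m$, so the coefficient ideal $\langle a_0,\dots,a_n\rangle$ is contained in $\m$. If it were dense, then $\m$ would contain a finitely generated dense subideal, making $\m$ itself semi-regular and contradicting $\m\in q\mbox{-}\Max(R)$, since maximal $q$-ideals are precisely the maximal non-semi-regular ideals. Hence $\langle a_0,\dots,a_n\rangle$ is not dense, and by the criterion recalled in the Preliminary section (a polynomial is regular in $R[x]$ iff its coefficient ideal is semi-regular) $f$ is not regular, i.e.\ $f\notin S$.

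Once $\m[x]\cap S=\emptyset$ is established, I would note that $\m[x]$ is prime in $R[x]$ because $R[x]/\m[x]\cong(R/\m)[x]$ is a domain, so $\m[x]\T(R[x])=S^{-1}\m[x]$ is a prime ideal of $\T(R[x])$. Applying the standard identity $(S^{-1}A)_{S^{-1}P}\cong A_P$ for a multiplicative set $S$ disjoint from a prime $P$ then gives
\[
\T(R[x])_{\m\otimes_R\T(R[x])}=(S^{-1}R[x])_{\m[x]S^{-1}R[x]}\cong R[x]_{\m[x]},
\]
which is the claim. I do not expect any serious obstacle: the only substantive point is the disjointness $\m[x]\cap S=\emptyset$, and this is an immediate consequence of the maximal $q$-ideal property combined with the regular-element characterization; everything else is formal manipulation with flat base change and iterated localization.
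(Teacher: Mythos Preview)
Your proof is correct and follows essentially the same route as the paper: the paper also writes $\T(R[x])$ as the localization of $R[x]$ at the set $\Sigma(\tau_q)$ of polynomials with semi-regular content (which coincides with your $S$), observes that $\Sigma(\tau_q)\subseteq R[x]\setminus\m[x]$ for $\m\in q\text{-}\Max(R)$, and then invokes the iterated-localization identity. Your version is simply more explicit about the flatness identification $\m\otimes_R\T(R[x])\cong\m[x]\T(R[x])$ and the primality of $\m[x]$, both of which the paper takes for granted.
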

\begin{proof} Let $\sum(\tau_q)$ be the set of all polynomials with contents in $\Q$. Then it is easy to verify $\sum(\tau_q)\subseteq R[x]-\m[x]$, so we have
$$\T(R[x])_{\m\otimes_R\T(R[x])}\cong (R[x]_{\sum(\tau_q)})_{\m[x]_{\sum(\tau_q)}}\cong R[x]_{\m[x]}.$$
\end{proof}

\begin{theorem}\label{poly} Let $R$ be a ring with  $\T(R[x])$ coherent.  Then
	\begin{center}
		$\tau_q$\mbox{-}\cwd$(R[x])=$	$\tau_q$\mbox{-}\cwd$(R)$.	\end{center}
\end{theorem}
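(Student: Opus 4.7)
The plan is to exploit the dichotomy $\tau_q$-$\cwd(R)\in\{0,\infty\}$ furnished by Theorem~\ref{q-cwd-cdw} (using that $\T(R[x])$ is coherent) and treat the two cases separately, combined with the easy-direction inequality $\tau_q$-$\cwd(R)\le\tau_q$-$\cwd(R[x])$ which I would establish first without invoking the coherence hypothesis.

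The first step is to prove $\tau_q$-$\cwd(R)\le\tau_q$-$\cwd(R[x])$. Fix $\m\in q$-$\Max(R)$. I would verify that $\m[x]$ contains no finitely generated semi-regular ideal of $R[x]$: given any $I=\langle f_1,\ldots,f_k\rangle\subseteq\m[x]$, let $J\subseteq\m$ be the finitely generated $R$-ideal generated by the coefficients of the $f_i$; since $\m$ is a maximal $q$-ideal, $J$ is not semi-regular in $R$, so some $0\ne s\in R$ annihilates $J$, hence $s\cdot I=0$, contradicting semi-regularity of $I$ in $R[x]$. By Zorn's lemma, $\m[x]\subseteq\mathfrak{n}$ for some $\mathfrak{n}\in q$-$\Max(R[x])$. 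Next, as in the proof of Lemma~\ref{L-poly}, $R[x]_{\m[x]}\cong R_\m(x)$ (the Nagata ring of the local ring $R_\m$), and since $R_\m\to R_\m(x)$ is faithfully flat, $\cwd(R_\m)\le\cwd(R_\m(x))$. Combining with $\cwd(R[x]_{\m[x]})\le\cwd(R[x]_{\mathfrak{n}})\le\tau_q$-$\cwd(R[x])$ (the first by further flat localization, the second from Proposition~\ref{local-q-fd}) and taking the supremum over $\m\in q$-$\Max(R)$ yields the desired inequality.

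Now by Theorem~\ref{q-cwd-cdw}, $\tau_q$-$\cwd(R)=\cwd(\T(R[x]))\in\{0,\infty\}$. If $\tau_q$-$\cwd(R)=\infty$, the first step immediately forces $\tau_q$-$\cwd(R[x])=\infty$ as well. If $\tau_q$-$\cwd(R)=0$, Proposition~\ref{char-tvn} tells us $R$ is reduced with $\Min(R)$ compact. Then $R[x]$ is reduced (standard), and the standard homeomorphism $\Min(R)\cong\Min(R[x])$ via $\p\mapsto\p[x]$ (valid for reduced $R$) shows $\Min(R[x])$ is compact. Applying Proposition~\ref{char-tvn} to $R[x]$ yields $\tau_q$-$\cwd(R[x])=0=\tau_q$-$\cwd(R)$.

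The main obstacle is the first inequality: verifying that $\m[x]$ avoids semi-regular sub-ideals of $R[x]$ (so it extends to a maximal $q$-ideal of $R[x]$), identifying $R[x]_{\m[x]}$ with the Nagata ring $R_\m(x)$, and then using faithful flatness to transport $\cwd(R_\m)$ into a lower bound for $\tau_q$-$\cwd(R[x])$. Once that is in hand, the coherence hypothesis enters only through the dichotomy in Theorem~\ref{q-cwd-cdw}, after which the zero case reduces to the preservation of reducedness and of compactness of the minimal spectrum under polynomial extensions.
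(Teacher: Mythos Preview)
Your proof is correct but proceeds along a genuinely different route from the paper's.

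For the inequality $\tau_q\text{-}\cwd(R)\le\tau_q\text{-}\cwd(R[x])$, the paper shows the sharper fact that $\m[x]$ actually lies in $q\text{-}\Max(R[x])$ (not merely that it extends to some $\mathfrak n$), and then passes through $\T(R[x])$ via Lemma~\ref{L-poly} and Proposition~\ref{fd-TR}. Your argument, by contrast, only needs $\m[x]\subseteq\mathfrak n$ for some $\mathfrak n\in q\text{-}\Max(R[x])$ and then uses the faithful flatness of $R_\m\to R_\m(x)$ together with the standard inequality for weak global dimension under localization. This is a cleaner and more self-contained route to the easy direction. (Two small remarks: the formula $\tau_q\text{-}\cwd(R[x])=\sup_{\mathfrak n}\cwd(R[x]_{\mathfrak n})$ you invoke is in Theorem~\ref{q-dim}, not Proposition~\ref{local-q-fd}; and the isomorphism $R[x]_{\m[x]}\cong R_\m(x)$ is a standard localization identity rather than the content of Lemma~\ref{L-poly}.)

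For the reverse inequality, the paper argues directly: it shows every $\p\in q\text{-}\Max(R[x])$ has the form $\q[x]$ with $\q\in q\text{-}\Max(R)$, uses Lemma~\ref{L-poly} to identify $R[x]_{\p}$ with a localization of $\T(R[x])$, and then bounds $\cwd(R[x]_{\p})$ by $\cwd(\T(R[x]))=\tau_q\text{-}\cwd(R)$ via Theorem~\ref{q-cwd-cdw}. You instead exploit the dichotomy $\tau_q\text{-}\cwd(R)\in\{0,\infty\}$ from Theorem~\ref{q-cwd-cdw}: the $\infty$ case is forced by the easy inequality, and the $0$ case reduces to the preservation of reducedness and of compactness of the minimal spectrum under $R\mapsto R[x]$, invoking Proposition~\ref{char-tvn} on both sides. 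The paper's approach is more structural and in principle works level by level, while yours is more economical given the dichotomy---it sidesteps Lemma~\ref{L-poly} entirely in the hard direction and recovers essentially the content of the paper's subsequent corollary on $\tau_q$-VN regularity of $R[x]$ as part of the main proof.
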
 		
\begin{proof} Suppose 	$\tau_q$\mbox{-}\cwd$(R[x])\leq n$. Let $M$ be an $R$-module and $\m\in q$-$\Max(R)$. Then $\m[x]\in q$-$\Max(R[x])$. Indeed, let $I$ be a non-semi-regular (equivalently, non-regular) ideal of $R[x]$ that contains $\m[x]$. Then $I$ does not contain a regular element and so its content $c(I)$ is  non-semi-regular that contains $\m$. By the maximality of  $\m$, we have $c(I)=\m$ which implies $I=\m[x]$. It follows by Proposition \ref{local-q-fd} and Lemma \ref{L-poly} that
\begin{center}
	\fd$_{\T(R[x])_{\m\otimes_R\T(R[x])}}M\otimes_R\T(R[x])_{\m\otimes_R\T(R[x])}$=\fd$_{R[x]_{\m[x]}}M[x]_{\m[x]}\leq n.$
\end{center}
Hence \fd$_{\T(R[x])}M\otimes_R\T(R[x])\leq n$, and so $\tau_q$-\fd$_R(M)\leq n$ by Proposition \ref{fd-TR}. Consequently, $\tau_q$\mbox{-}\cwd$(R)\leq n$.

On the other hand, suppose $\tau_q$\mbox{-}\cwd$(R)\leq n$. Let $\p\in q$-$\Max(R[x])$.  Then $\p\cap R$ is also maximal  non-regular. So $\q := \p\cap R\in q$-$\Max(R)$ and $\p = \q[x]$.
It follows by Theorem \ref{q-dim}, Lemma \ref{L-poly} and Theorem \ref{q-cwd-cdw} that
\begin{align*}
	\tau_q\mbox{-\cwd}(R[x])=&\sup\{\mbox{\cwd}(R[x]_\p)\mid \p\in q\mbox{-\Max}(R[x])\}\\
	= & \sup\{\mbox{\cwd}(T(R[x])_{\p\otimes_RT(R[x])})\mid \p\in q\mbox{-\Max}(R[x])\} \\
	\leq & \mbox{\cwd}(\T(R[x]))\\
	=& \tau_q\mbox{-\cwd}(R).
\end{align*}
Consequently, $\tau_q$-\cwd$(R)=\tau_q$-\cwd$(R[x])$.
\end{proof}

\begin{corollary} Let $R$ be a ring.  Then $R$ is a $\tau_q$-\VN\ regular ring if and only if so is $R[x]$.
\end{corollary}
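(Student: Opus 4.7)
The plan is to reduce the corollary to the characterization in Proposition~\ref{char-tvn} stating that a ring $S$ is $\tau_q$-\VN\ regular if and only if $S$ is reduced and $\Min(S)$ is compact, and then to verify that both conditions transfer between $R$ and $R[x]$.

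For the forward direction, the most direct route goes through Theorem~\ref{poly}. Assuming $R$ is $\tau_q$-\VN\ regular, Proposition~\ref{char-tvn} yields that $\T(R[x])$ is a von Neumann regular ring, and hence coherent (every finitely generated ideal of a von Neumann regular ring is generated by an idempotent and is therefore finitely presented). Theorem~\ref{poly} then produces $\tau_q$\mbox{-}\cwd$(R[x])=\tau_q$\mbox{-}\cwd$(R)=0$, and applying Proposition~\ref{char-tvn} once more gives that $R[x]$ is $\tau_q$-\VN\ regular.

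For the converse, I would invoke the reducedness/compactness characterization directly. Assuming $R[x]$ is $\tau_q$-\VN\ regular, Proposition~\ref{char-tvn} tells us that $R[x]$ is reduced (so $R$ is reduced) and that $\Min(R[x])$ is compact; it remains to deduce compactness of $\Min(R)$. Since $R$ is reduced, the standard assignment $\p\mapsto\p[x]$ is a bijection $\Min(R)\to\Min(R[x])$. I would show this is a homeomorphism for the induced Zariski topologies by matching basic opens: for $f\in R[x]$ with coefficients $a_0,\dots,a_n$, one has $f\notin\p[x]$ iff the content $c(f)\not\subseteq\p$, so $D(f)\cap\Min(R[x])$ pulls back to $\bigcup_i D(a_i)\cap\Min(R)$; conversely, for $r\in R$ the basic open $D(r)\cap\Min(R)$ corresponds directly to $D(r)\cap\Min(R[x])$. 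Compactness therefore transfers from $\Min(R[x])$ to $\Min(R)$, and Proposition~\ref{char-tvn} concludes that $R$ is $\tau_q$-\VN\ regular.

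The main technical point is the homeomorphism $\Min(R)\cong\Min(R[x])$ for reduced $R$; this is a classical fact about minimal primes of polynomial rings and, if desired, may be treated as a short content-ideal verification or even cited. Aside from this, the argument is just a direct combination of the equivalences collected in Proposition~\ref{char-tvn} with the polynomial-ring formula of Theorem~\ref{poly}.
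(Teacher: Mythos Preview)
Your forward direction coincides with the paper's: both use Proposition~\ref{char-tvn} to see that $\T(R[x])$ is von Neumann regular (hence coherent) and then invoke Theorem~\ref{poly} to obtain $\tau_q$-\cwd$(R[x])=0$.

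For the converse the two arguments diverge. The paper argues that if $R[x]$ is $\tau_q$-\VN\ regular then $\T(R[x,y])$ is von Neumann regular, and then passes to $\T(R[x])$ via the asserted isomorphism $\T(R[x])\cong \T(R[x,y])/y\T(R[x,y])$. You instead stay with the characterization ``reduced and $\Min$ compact'' from Proposition~\ref{char-tvn} and transfer both properties along the classical bijection $\Min(R)\to\Min(R[x])$, $\p\mapsto\p[x]$; the content-ideal computation you sketch (namely $f\notin\p[x]\Leftrightarrow c(f)\not\subseteq\p$) is precisely what shows this bijection is a homeomorphism, so compactness descends. Your route is correct, elementary, and self-contained. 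It is also more robust than the paper's: since $y$ is a non-zero-divisor in $R[x,y]$ it becomes a unit in $\T(R[x,y])$, so the quotient $\T(R[x,y])/y\T(R[x,y])$ is the zero ring rather than $\T(R[x])$, and the paper's converse as written needs repair. Your argument avoids this issue entirely.
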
 	
\begin{proof} Suppose $R$ is a $\tau_q$-\VN\ regular ring. Then $\T(R[x])$ is a \VN\ regular ring, and so is coherent. Hence $\tau_q$\mbox{-}\cwd$(R[x])=\tau_q$\mbox{-}\cwd$(R)=0$ by Theorem \ref{poly}.  Hence $R[x]$ is a $\tau_q$-\VN\ regular ring. On the other hand, suppose  $R[x]$ is a $\tau_q$-\VN\ regular ring. Then  $\T(R[x,y])$ is a \VN\ regular ring, so is  $\T(R[x])\cong \T(R[x,y])/y\T(R[x,y])$  . Hence $R$ is a $\tau_q$-\VN\ regular ring.
\end{proof}

\begin{acknowledgement}\quad\\
The fourth author was supported by National Natural Science Foundation of China (No. 12201361).
\end{acknowledgement}

\end{document}